\makeatletter \@addtoreset{equation}{section} \makeatother
\newtheorem{theorem}{Theorem}[section]
\newtheorem{proposition}{Proposition}[section]
\newtheorem{lemma}{Lemma}[section]
\newtheorem{remark}{Remark}[section]
\begin{document}
\title{\sc On degenerate fractional Schr\"{o}dinger--Kirchhoff-Poisson equations
with upper critical nonlinearity and electromagnetic fields }
\author{{\sc Zhongyi Zhang$^{a,}$\footnote{{{Corresponding author}}}
and  Du\v{s}an D. Repov\v{s}$^{b,c,d}$}\\
$^{\small\mbox{a}}${\small
Nanjing Vocational College of Information Technology,  Nanjing, Jiangsu, 210023, China, 
 {\it zhyzhang88@126.com}}\\
$^{\small\mbox{b}}${\small Faculty of Education, University of Ljubljana, Ljubljana, 1000,
Slovenia, {\it dusan.repovs@pef.uni-lj.si}}\\
$^{\small\mbox{c}}${\small Faculty of
Mathematics and Physics, University of Ljubljana, Ljubljana, 1000,
Slovenia, {\it dusan.repovs@fmf.uni-lj.si}}\\
$^{\small\mbox{d}}${\small  Institute of Mathematics, Physics and Mechanics,  Ljubljana, 1000,
Slovenia, {\it dusan.repovs@guest.arnes.si}}}
\date{}
\maketitle
\begin{abstract}
{This paper intends to study the following degenerate fractional
Schr\"{o}dinger--Kirchhoff-Poisson equations with critical
nonlinearity and electromagnetic fields in $\mathbb{R}^3$
\begin{equation*}
\left\{
\begin{array}{lll}
\varepsilon^{2s}M([u]_{s,A}^2)(-\Delta)_{A}^su + V(x)u + \phi u  =
k(x)|u|^{r-2}u +
\left(\mathcal{I}_\mu*|u|^{2_s^\sharp}\right)|u|^{2_s^\sharp-2}u, \ \ &x\in \mathbb{R}^3,\smallskip\smallskip\\
(-\Delta)^t\phi = u^2,  \ \  &x\in \mathbb{R}^3,
\end{array}\right.
\end{equation*}
where $\varepsilon > 0$ is a positive parameter, $3/4<s<1$, $0 < t <
1$,  $V$ is an electric potential satisfying
some suitable assumptions,
 $0 < k_\ast \leq k(x) \leq k^\ast$, $\mathcal{I}_\mu(x) =
|x|^{3-\mu}$ with $0<\mu<3$, $2_s^\sharp =\frac{3+\mu}{3-2s}$ and $2
< r < 2_s^\sharp$. With the help of the concentration compactness
principle and variational methods, 
 together with some careful
analytical 
methods,  we establish the existence and multiplicity of
solutions for the above problem when $\varepsilon \rightarrow 0$  in the
degenerate cases, that is when the Kirchhoff term $M$  vanishes at zero.
}\medskip

\emph{\it Keywords:}  Fractional Schr\"{o}dinger--Kirchhoff-Poisson
equations, Degenerate cases, Concentration compactness principle,
Upper critical nonlinearity, Variational methods.\medskip

\emph{\it Math. Subj. Classif. (2020):} 35A15, 35B99, 35J60, 47G20.
\end{abstract}

\section{Introduction}

In this paper, we intend to study the following degenerate
fractional Schr\"{o}dinger--Kirchhoff-Poisson equations with upper
critical nonlinearity and electromagnetic fields in $\mathbb{R}^3$
\begin{equation}\label{e1.1}
\left\{
\begin{array}{lll}
\varepsilon^{2s}M([u]_{s,A}^2)(-\Delta)_{A}^su + V(x)u + \phi u  =
k(x)|u|^{r-2}u +
\left(\mathcal{I}_\mu*|u|^{2_s^\sharp}\right)|u|^{2_s^\sharp-2}u, \ \ &x\in \mathbb{R}^3,\smallskip\smallskip\\
(-\Delta)^t\phi = u^2,  \ \  &x\in \mathbb{R}^3,
\end{array}\right.
\end{equation}
where $\varepsilon > 0$ is a positive parameter, $3/4<s<1$, $0 < t <
1$, $\mathcal{I}_\mu(x) = |x|^{3-\mu}$ with $0<\mu<3$,  $0 <
k_\ast \leq k(x) \leq k^\ast$,  $2_s^\sharp =\frac{3+\mu}{3-2s}$ is
the upper Sobolev critical exponent ,
 $2 < r < 2_s^\sharp$, $V$ is
an electric potential, $(-\Delta)_{A}^s$ and $A$ are called the
magnetic operator and magnetic potential, respectively. 

This problem
belongs to the 
fractional Schr\"{o}dinger-Poisson systems due to  the
potential $\phi$ satisfying a nonlinear fractional Poisson equation
in problem \eqref{e1.1}. According to d'Avenia and Squassina
\cite{da}, the fractional operator $(-\Delta)_{A}^s$ is defined by
\begin{eqnarray*}
(-\Delta)_{A}^s u(x) := 2\lim_{\varepsilon \rightarrow 0}
\int_{\mathbb{R}^N \setminus
B_\varepsilon(x)}\frac{u(x)-e^{i(x-y)\cdot
A(\frac{x+y}{2})}u(y)}{|x-y|^{N+2s}}dy, \quad x\in  \mathbb{R}^N,
\end{eqnarray*}
and magnetic potential $A$ is given by
\begin{eqnarray*}
[u]_{s,A}^2 := \iint_{\mathbb{R}^{2N}}\frac{|u(x)-e^{i(x-y)\cdot
A(\frac{x+y}{2})}u(y)|^2}{|x-y|^{N+2s}}dx dy.
\end{eqnarray*}
Throughout the paper,  the electric potential $V$ and Kirchhoff
function $M$  will satisfy the following assumptions:
\begin{itemize}
\item[($\mathcal {V}$)]  $V(x) \in C(\mathbb{R}^3, \mathbb{R})$, $V(0) = \min_{x\in \mathbb{R}^3} V(x) = 0$
and the set $V^{b} = \{x \in \mathbb{R}^3: V(x) < b\}$ has finite
Lebesgue measure, where $b > 0$ is a positive constant.

\item[$(\mathfrak{M})$] $(M_1)$ There exists $\sigma \in (1, 2_s^\sharp/2)$ satisfying
$\sigma\mathcal {M}(t)\geq M(t)t$ for all $t\geq0$, where $\mathcal
{M}(t)=\int_0^tM(s)ds$.

$(M_2)$ There exists $m_1 > 0$ such that $M(t) \geq m_1
t^{\sigma-1}$ for all $t \in \mathbb{R}^+$ and $M(0) = 0$.
\end{itemize}
\begin{remark}\label{rem1.1}
The typical function that satisfies conditions $(M_1)$-$(M_2)$ is
given by $M(t)=a+b\,t^{\sigma-1}$ for $t\in\mathbb{R}^+_0$, where
$a\in\mathbb{R}^+_0$, $b\in\mathbb{R}^+_0,$ and $a+b>0$. In
particular, when $M(t) \geq d > 0$ for some $d$ and all $t \geq 0$,
this case is said to be nondegenerate, while it is called
degenerate if $M(0) = 0$ and $M(t)> 0$ for $t
> 0$. However, in proving the compactness condition, 
the degenerate and the nondegenerate case are completely different
 and
it's more complicated in the degenerate case. In this paper, we deal
with the critical fractional Schr\"{o}dinger--Kirchhoff-Poisson
equations with electromagnetic fields in the degenerate case.
\end{remark}

Our motivation to study problem \eqref{e1.1} mainly comes from the
application of the fractional magnetic operator. We note that  the
equation with fractional magnetic operator often arises as a model
for various physical phenomena, in particular in the study of the
infinitesimal generators of L\'{e}vy stable diffusion processes
\cite{di}.  Also, several
papers on nonlocal operators and
on their applications exist, and hence we can refer
interested readers to \cite{du, la, to, x3, x1}. In order to further
research this type of equation by variational methods,  many
 have established the basic properties of fractional Sobolev
spaces, readers are referred to \cite{MRS, pa}.

First,  we make a quick overview of the literature on the magnetic
Schr\"{o}dinger equation without the Poisson term. For example,   there
are works on the magnetic Schr\"{o}dinger equation
\begin{equation}\label{e1.2}
-(\nabla u-{\rm i} A)^2u + V(x)u = f(x, |u|)u,
\end{equation}
where the  magnetic operator in Eq. \eqref{e1.2} is given by
\begin{displaymath}
-(\nabla u-{\rm i} A)^2u = -\Delta u +2iA(x)\cdot\nabla u +
|A(x)|^2u + iu \mbox{div} A(x).
\end{displaymath}
Squassina and Volzone \cite{sq1} proved that, up to
correcting the operator by the factor $(1-s)$, one has 
$(-\Delta)^s_A u \rightarrow -(\nabla u-{\rm i} A)^2u$ as
$s\rightarrow1$. Thus, up to normalization, the nonlocal case can be
seen as an approximation of the local one. Recently, many
researchers have paid attention to 
 equations with  fractional magnetic operator. In
particular,  Mingqi et al.  \cite{MPSZ}  obtained some existence
results of Schr$\ddot{\mbox{o}}$dinger--Kirchhoff type equation
involving the magnetic operator
\begin{equation}\label{e1.3}
M([u]_{s,A}^2)(-\Delta)_A^su+V(x)u=f(x,|u|)u\quad \text{in
$\mathbb{R}^N$},
\end{equation}
where $f$ satisfies the subcritical growth condition. For the
critical growth case, Zhang et al. \cite{bin}  first considered
the following fractional Schr$\ddot{\mbox{o}}$dinger equations:
\begin{eqnarray}\label{eq12}
\varepsilon^{2s}(-\Delta)^{s}_{A_{\varepsilon}}u+V(x)u=f(x,|u|)u+K(x)|u|^{2_{\alpha}^{*}-2}u\quad\quad
\mbox{in}\ \mathbb{R}^{N}.
\end{eqnarray}
They proved  the existence of  ground state solution $u_{\varepsilon}$ by using variational methods. For the non--degenerate case, Liang  et
al. \cite{liang3} proved the  existence and multiplicity of
solutions for a class of Schr$\ddot{\mbox{o}}$dinger--Kirchhoff type
equation. Ambrosio \cite{am7} obtained the existence and concentration results for some fractional Schr\"{o}dinger equations
in $\mathbb{R}^N$ with magnetic fields. As for other results, we refer to \cite{am8, am9, liang5, liang6} and
to the
references therein. We draw the attention of the reader to the degenerate case
involving the magnetic operator in Liang et al. \cite{liang4} and
Mingqi et al. \cite{MPSZ}.

On the other hand, for the case $A \equiv 0$ of problem
\eqref{e1.1}, some researchers  used various methods to study
this kind of problem. For example, using the perturbation approach,
Zhang et al.  \cite{zhj} obtained the existence results for the
fractional Schr\"{o}dinger-Poisson system with a general subcritical
or critical nonlinearity. In \cite{mu}, the authors looked for the
number of positive solutions for a class of doubly singularly
perturbed fractional Schr\"{o}dinger-Poisson system via the
Lyusternik-Schnirelmann category. Liu  \cite{liu} studied
the existence of multi-bump solutions for the fractional
Schr\"{o}dinger-Poisson system by means of the Lyapunov-Schmidt
reduction method. By using the non-Nehari manifold approach, Chen
and Tang  \cite{chen2} proved the existence of ground state
solutions for fractional Schr\"{o}dinger-Poisson system. For more
related results, see recent works \cite{am2, am5, am6,
fis2, gi, liang7, sh, song, te1, wang1, zhao} and the references
therein.

Once we turn our attention to the Schr\"{o}dinger--Kirchhoff-Poisson
equations with electromagnetic fields, we immediately see that the
literature is relatively scarce. In this case, we can cite the
recent works \cite{am3, am4, liu1}.  Ambrosio
\cite{am3} proved concentration results for a class of
fractional Schr\"{o}dinger-Poisson type equation with magnetic field
and subcritical growth. For the critical growth case, Ambrosio
\cite{am4} also obtained  the multiplicity and concentration of
nontrivial solutions to  the fractional Schr\"{o}dinger-Poisson
equation with magnetic field. However, to the best of our knowledge,
semiclassical solutions to the degenerate fractional
Schr\"{o}dinger--Kirchhoff-Poisson equations with critical
nonlinearity and electromagnetic fields \eqref{e1.1} have not
been considered until now.

Inspired by the previously mentioned works, our main objective is to
study the critical fractional Schr\"{o}dinger--Kirchhoff-Poisson
equations with electromagnetic fields in degenerate cases.  In our proofs
we use the concentration compactness
principle and variational methods. We also invoke some
minimax arguments.  Moreover, due to the appearance of the critical
term and degenerate nature of the Kirchhoff coefficient, the Sobolev
embedding does not possess the compactness, therefore we need
some technical estimates.

We are now in a position to state our main results - two existence theorems.
\begin{theorem}\label{the1.1} Assume that conditions $(\mathcal {V})$  and $(\mathfrak{M})$ hold.
Then for every $\kappa > 0$, there is $\mathcal {E}_\kappa > 0$ such
that if $0 < \varepsilon <  \mathcal {E}_\kappa$, then problem
\eqref{e1.1} has at least one solution $u_\varepsilon$. Moreover,
$u_\varepsilon \rightarrow 0$ in $E$ as $\varepsilon \rightarrow 0$.
\end{theorem}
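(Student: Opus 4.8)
The plan is to recast \eqref{e1.1} as a single variational problem and then run a mountain pass scheme, with the degenerate Kirchhoff coefficient and the upper critical Choquard term as the two sources of difficulty. First I would fix the magnetic space $E$ (the fractional magnetic Sobolev space carrying the potential $V$, with $\|u\|^2=[u]_{s,A}^2+\int_{\mathbb{R}^3}V(x)|u|^2\,dx$) and, for each $u\in E$, solve the fractional Poisson equation $(-\Delta)^t\phi=u^2$ to obtain a unique nonnegative $\phi_u$ satisfying $\int_{\mathbb{R}^3}\phi_u|u|^2\,dx=[\phi_u]_t^2$. Standard estimates show that $u\mapsto\phi_u$ is continuous and weakly continuous on bounded sets, because the embedding $E\hookrightarrow L^p(\mathbb{R}^3)$ is compact for every subcritical $p\in[2,2_s^*)$ with $2_s^*=6/(3-2s)$ — this is exactly where hypothesis $(\mathcal{V})$, the finite Lebesgue measure of $V^b$, enters. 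Substituting $\phi_u$ produces the energy functional
\begin{equation*}
I_\varepsilon(u)=\frac{\varepsilon^{2s}}{2}\mathcal{M}([u]_{s,A}^2)+\frac12\int_{\mathbb{R}^3}V(x)|u|^2\,dx+\frac14\int_{\mathbb{R}^3}\phi_u|u|^2\,dx-\frac1r\int_{\mathbb{R}^3}k(x)|u|^r\,dx-\frac{1}{2\cdot2_s^\sharp}D(u),
\end{equation*}
where $D(u)=\iint_{\mathbb{R}^6}\frac{|u(x)|^{2_s^\sharp}|u(y)|^{2_s^\sharp}}{|x-y|^{3-\mu}}\,dx\,dy$, whose critical points are the weak solutions of \eqref{e1.1}; by $(M_2)$ the leading term dominates $\frac{\varepsilon^{2s}m_1}{2\sigma}[u]_{s,A}^{2\sigma}$ from below.

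Next I would verify the mountain pass geometry of $I_\varepsilon$ on $E$, localized in the region fixed by the auxiliary scale $\kappa$ (via a truncation of $I_\varepsilon$ designed to restore global compactness, whose effect becomes inactive as $\varepsilon\to0$). The key analytic inputs are the Hardy--Littlewood--Sobolev inequality, giving $D(u)\le C\|u\|_{2_s^*}^{2\cdot2_s^\sharp}$, and the diamagnetic inequality $[|u|]_s\le[u]_{s,A}$, which transfers the fractional Sobolev embedding to the magnetic setting (a fractional Gagliardo--Nirenberg interpolation being needed to control the $L^r$ term in the degenerate direction). Since $2\cdot2_s^\sharp>2$, $r>2$ and $2_s^\sharp>2$, the positive quadratic potential term together with the degenerate Kirchhoff term dominates near the origin, yielding a uniform positive lower bound on a small sphere, while testing against a fixed profile with $\mathcal{M}$ of order $2\sigma<2_s^\sharp$ and the dominant critical term $t^{2\cdot2_s^\sharp}D$ gives a point of negative energy. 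This produces a mountain pass level $c_\varepsilon>0$ and an associated Palais--Smale sequence.

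The main obstacle is the verification of a local Palais--Smale condition, where degeneracy and criticality conspire. Boundedness of a $(PS)_{c_\varepsilon}$ sequence follows from $(M_1)$: evaluating $I_\varepsilon(u_n)-\frac{1}{2\cdot2_s^\sharp}\langle I_\varepsilon'(u_n),u_n\rangle$ and using $\sigma\mathcal{M}(t)\ge M(t)t$ together with $\sigma<2_s^\sharp/2$ produces a strictly positive coefficient in front of $\varepsilon^{2s}\mathcal{M}([u_n]_{s,A}^2)$, $\int V|u_n|^2$ and $\int\phi_{u_n}|u_n|^2$, the subcritical $L^r$ term being absorbed; hence $[u_n]_{s,A}$ and $\int V|u_n|^2$ stay bounded. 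Passing to a weak limit $u$, the subcritical term and the Poisson term converge by the compact subcritical embedding granted by $(\mathcal{V})$; the hard part is the critical Choquard term $D$, for which I would invoke the second concentration--compactness principle for the nonlocal critical nonlinearity to show that the only possible loss of compactness is the concentration of the measures $|u_n|^{2_s^\sharp}$ at atoms, each carrying a quantized energy of at least some level $c^\ast$ determined by the best constant $S_{H,L}$ of the magnetic HLS--Sobolev embedding together with the degenerate exponent $\sigma$. Ruling this out amounts to the threshold inequality $c_\varepsilon<c^\ast$.

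Finally, to close the argument I would estimate $c_\varepsilon$ from above along the mountain pass path using a family of truncated extremals $U_\delta$ for $S_{H,L}$: the degenerate Kirchhoff term scales like $\varepsilon^{2s}[tU_\delta]_{s,A}^{2\sigma}$, and optimizing the fiber in $t$ gives $c_\varepsilon\lesssim\varepsilon^{2s\cdot2_s^\sharp/(2_s^\sharp-\sigma)}$, while the subcritical perturbation $k(x)|u|^r$ (with $k\ge k_\ast>0$) strictly lowers the level below the pure-critical threshold $c^\ast$. Choosing $\mathcal{E}_\kappa$ so that $c_\varepsilon<c^\ast$ for $0<\varepsilon<\mathcal{E}_\kappa$ restores compactness, and the mountain pass theorem delivers a nontrivial critical point $u_\varepsilon$ at the level $c_\varepsilon$. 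The convergence $u_\varepsilon\to0$ in $E$ then follows from the energy bound: since $c_\varepsilon\to0$ faster than $\varepsilon^{2s}$, the identity $\langle I_\varepsilon'(u_\varepsilon),u_\varepsilon\rangle=0$ combined with $(M_1)$--$(M_2)$ and the subcritical control of the perturbation forces $[u_\varepsilon]_{s,A}\to0$ and $\int_{\mathbb{R}^3}V(x)|u_\varepsilon|^2\,dx\to0$, that is $\|u_\varepsilon\|\to0$.
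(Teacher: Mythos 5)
Your skeleton (mountain pass geometry, boundedness of $(PS)$ sequences via $(M_1)$, concentration--compactness for the critical Choquard term, a level-below-threshold argument, and finally $u_\varepsilon\to 0$ from the energy identity) matches the paper's, but the decisive step --- placing the minimax level strictly below the compactness threshold --- has a genuine gap. Because the Kirchhoff term is degenerate and carries the factor $\varepsilon^{2s}$, the quantized energy of a concentration atom is \emph{not} a fixed constant $c^\ast$: the analysis of Lemma \ref{lem3.2} shows that $(PS)_c$ holds only for $c<\alpha_0\varepsilon^{\tau}$, with $\alpha_0$ the \emph{explicit} constant \eqref{e3.8} and $\tau$ in \eqref{e3.9} the maximum of two exponents coming from two distinct branches (atoms $x_i$ and concentration at infinity; your single threshold ignores the infinity branch, which carries the different rate $\varepsilon^{4\sigma s/(2_s^\sharp-2\sigma)}$). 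Your upper bound $c_\varepsilon\lesssim\varepsilon^{2s2_s^\sharp/(2_s^\sharp-\sigma)}$ is of exactly the same order in $\varepsilon$ as the threshold, so an estimate with an uncontrolled implicit constant cannot yield the strict inequality $c_\varepsilon<\alpha_0\varepsilon^{\tau}$, and the Brezis--Nirenberg mechanism (``the $k(x)|u|^r$ term strictly lowers the level'') gives strictness at fixed $\varepsilon$ but no uniform constant comparison as $\varepsilon\to0$. Testing with truncated extremals of $S_H$ cannot repair this: for those profiles the ratio between the Gagliardo seminorm and the critical Choquard norm is pinned at $S_H$, so the constant in your level estimate is bounded below and need not beat $\alpha_0$. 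The paper's device is different in substance: since the normalization is taken in the \emph{subcritical} $L^r$ norm, $\inf\{[\phi]_{s}^2:\phi\in C_0^\infty,\ |\phi|_r=1\}=0$, so one may choose $\phi_\delta$ with arbitrarily small seminorm, set $q_{\varepsilon,\delta}(x)=e^{iA(0)x}\phi_\delta(\varepsilon^{-(\tau+2s)/3}x)$, and prove $\max_{t\geq0}J_\varepsilon(tq_{\varepsilon,\delta})\leq\kappa\varepsilon^{\tau}$ for \emph{every} $\kappa>0$ (Lemmas \ref{lem4.2}--\ref{lem4.3}); taking $\kappa<\alpha_0$ then closes the argument via the mountain pass theorem.

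A second, related defect: in your normalization the electric term $\frac12\int_{\mathbb{R}^3}V|u|^2dx$ and the Poisson term carry no $\varepsilon$ factor. Along a fixed profile $U$, balancing $\varepsilon^{2s}t^{2\sigma}$ against $t^{22_s^\sharp}$ gives the fiber optimum $t_\varepsilon\sim\varepsilon^{s/(2_s^\sharp-\sigma)}$ and hence Kirchhoff and critical contributions of your claimed order $\varepsilon^{2s2_s^\sharp/(2_s^\sharp-\sigma)}$; but then the $V$-term alone contributes $\sim t_\varepsilon^2\sim\varepsilon^{2s/(2_s^\sharp-\sigma)}$, larger by the factor $\varepsilon^{-2s(2_s^\sharp-1)/(2_s^\sharp-\sigma)}$, and the quartic Poisson term similarly dominates. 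The estimate survives only if the test functions concentrate at the origin, where $V$ vanishes by $(\mathcal{V})$, and are rescaled so that every term acquires the factor $\varepsilon^{\tau}$ --- precisely what the paper's scaling $\varepsilon^{-(\tau+2s)/3}$ together with the choice $\varepsilon<\varepsilon^\ast$ ensuring $V(\varepsilon x)\leq\delta/|\phi_\delta|_2^2$ on the support accomplishes; your unspecified ``truncation of $I_\varepsilon$'' does not address this. A minor further point: for boundedness you subtract $\frac{1}{22_s^\sharp}\langle I_\varepsilon'(u_n),u_n\rangle$, which makes the critical term cancel and leaves the negative $L^r$ term to be ``absorbed''; this absorption requires an interpolation with exponent constraints that can fail (e.g.\ when $r<2\sigma$), whereas the paper subtracts $\frac1r\langle J_\varepsilon'(u_n),u_n\rangle$ in \eqref{e3.7}, eliminating that term exactly (at the implicit cost of $r>2\sigma$).
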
 
\begin{theorem}\label{the1.2} Assume that conditions $(\mathcal {V})$  and $(\mathfrak{M})$ hold. Then for every $m \in \mathbb{N}$ and $\kappa > 0$, there
is $\mathcal {E}_{m\kappa}
> 0$ such that if $0 < \varepsilon < \mathcal {E}_{m\kappa}$, then problem \eqref{e1.1} has at least $m$ pairs of
solutions $u_{\varepsilon,i}$, $u_{\varepsilon,-i}$,
$i=1,2,\cdots,m$. Moreover, $u_{\varepsilon,i}\rightarrow 0$ in $E$
as $\varepsilon \rightarrow 0$, $i=1,2,\cdots,m$.
\end{theorem}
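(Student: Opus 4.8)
The plan is to exploit the $\mathbb{Z}_2$-symmetry of \eqref{e1.1}: both nonlinearities $k(x)|u|^{r-2}u$ and $(\mathcal{I}_\mu*|u|^{2_s^\sharp})|u|^{2_s^\sharp-2}u$ are odd in $u$, so the associated energy functional is even, and the $m$ pairs of solutions will be produced by a symmetric minimax scheme based on the Krasnoselskii genus. First I would reduce the system to a single functional by solving the Poisson equation $(-\Delta)^t\phi=u^2$ for a unique $\phi=\phi_u$ depending on $u$ and substituting it back; this yields an even functional $I_\varepsilon\in C^1(E,\mathbb{R})$ with $I_\varepsilon(0)=0$, whose critical points are precisely the solutions of \eqref{e1.1}. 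Because of the upper critical Choquard term $I_\varepsilon$ is unbounded below and the embedding is noncompact, so---exactly as in the proof of Theorem~\ref{the1.1}, and with the same parameter $\kappa$---I would replace $I_\varepsilon$ by a truncated even functional $\widetilde I_\varepsilon$ that agrees with $I_\varepsilon$ for $\|u\|_E$ small.

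The compactness step is the heart of the matter and is where the degeneracy bites. Using the second concentration--compactness principle adapted to the Hardy--Littlewood--Sobolev (Choquard) critical nonlinearity, I would show that $\widetilde I_\varepsilon$ satisfies the Palais--Smale condition $(PS)_c$ for every $c$ below a fixed threshold $c^\ast>0$ determined by the best constant of the relevant embedding. Since $M(0)=0$ destroys the usual coercive control of $[u]_{s,A}^2$, I would instead use $(M_2)$ in the form $M([u_n]_{s,A}^2)[u_n]_{s,A}^2\geq m_1[u_n]_{s,A}^{2\sigma}$ to recover boundedness of Palais--Smale sequences, and $(M_1)$, equivalently $\mathcal{M}(t)\geq\tfrac{1}{\sigma}M(t)t$, to keep the Kirchhoff contribution to the combination $I_\varepsilon(u_n)-\tfrac{1}{\theta}\langle I_\varepsilon'(u_n),u_n\rangle$ nonnegative for a suitable $\theta$. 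The concentration--compactness decomposition then rules out loss of mass at the critical scale as long as $c<c^\ast$, forcing strong convergence.

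For the multiplicity, fix $m\in\mathbb{N}$ and choose an $m$-dimensional subspace $W_m\subset E$ spanned by test functions localized near $x=0$, where $V(0)=0$. On the finite-dimensional $W_m$ all norms are equivalent, so one controls $\sup_{W_m}\widetilde I_\varepsilon$ and checks the symmetric minimax geometry. With $\gamma$ the genus and
\[
\Sigma_j=\bigl\{A\subset E\setminus\{0\}:\ A\ \text{closed},\ A=-A,\ \gamma(A)\geq j\bigr\},
\]
I would set $c_j^\varepsilon=\inf_{A\in\Sigma_j}\sup_{u\in A}\widetilde I_\varepsilon(u)$ for $j=1,\dots,m$. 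The key use of the parameter $\varepsilon$ is to push these levels below the compactness threshold: scaling the test functions with $\varepsilon$, one shows that for $0<\varepsilon<\mathcal{E}_{m\kappa}$ one has $0<c_1^\varepsilon\leq\cdots\leq c_m^\varepsilon<c^\ast$, so each $c_j^\varepsilon$ is a genuine critical value of $\widetilde I_\varepsilon$. By evenness each critical value produces a pair $\pm u_{\varepsilon,j}$, and when several $c_j^\varepsilon$ coincide the genus of the corresponding critical set supplies the missing multiplicity, so one obtains at least $m$ pairs $u_{\varepsilon,i},u_{\varepsilon,-i}$.

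It remains to remove the truncation and prove concentration. The minimax characterization together with the scaling shows $c_j^\varepsilon\to 0$ as $\varepsilon\to 0$, and the coercivity furnished by $(M_1)$--$(M_2)$ turns this into the a priori bound $\|u_{\varepsilon,j}\|_E\to 0$; hence for $\varepsilon<\mathcal{E}_{m\kappa}$ the solutions lie in the region where $\widetilde I_\varepsilon\equiv I_\varepsilon$, the truncation is inactive, and each $u_{\varepsilon,j}$ solves \eqref{e1.1}, while the same bound gives $u_{\varepsilon,i}\to 0$ in $E$. I expect the main obstacle to be precisely the $(PS)_c$ analysis in the degenerate regime: when $[u_n]_{s,A}^2\to 0$ along a Palais--Smale sequence the Kirchhoff prefactor $M([u_n]_{s,A}^2)$ vanishes and the standard Brezis--Lieb splitting for the magnetic Gagliardo seminorm breaks down, so handling the phase factors $e^{i(x-y)\cdot A(\frac{x+y}{2})}$ in the concentration--compactness decomposition, jointly with the nonlocal Choquard convolution, is the delicate technical core on which the whole argument rests.
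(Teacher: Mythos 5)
Your overall skeleton is recognizable in the paper, but there are two genuine gaps, and the first one touches the central point of the whole article. You posit a \emph{fixed}, $\varepsilon$-independent compactness threshold $c^\ast$ ``determined by the best constant of the relevant embedding.'' In this problem that is false: the subcritical and critical terms carry the weight $\varepsilon^{-2s}$ and the Kirchhoff coefficient is degenerate, and the concentration--compactness analysis (Lemma \ref{lem3.2}) yields $(PS)_c$ only for $c<\alpha_0\varepsilon^{\tau}$ with $\alpha_0,\tau$ as in \eqref{e3.8}--\eqref{e3.9}: the dichotomy coming from $\varepsilon^{-2s}\nu_i\geq m_1\omega_i^{\sigma}$ combined with $\omega_i\geq S_H\nu_i^{1/2_s^\sharp}$ forces any nonzero concentration mass to satisfy $\nu_i\geq\left(m_1S_H^{\sigma}\right)^{2_s^\sharp/(2_s^\sharp-\sigma)}\varepsilon^{2s2_s^\sharp/(2_s^\sharp-\sigma)}$, so the forbidden energy levels scale like $\varepsilon^{\tau}$ and the threshold itself tends to $0$ as $\varepsilon\to 0$. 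Consequently your step ``$c_j^{\varepsilon}\to 0$, hence eventually $c_j^{\varepsilon}<c^\ast$'' does not close the argument: you must prove the quantitative rate $c_j^{\varepsilon}\leq\kappa\varepsilon^{\tau}$ with $\kappa<\alpha_0$, i.e.\ the minimax levels must decay \emph{at least as fast} as the threshold. This is exactly what the paper's construction is for: test functions $\phi_\delta^i$ with disjoint supports and Gagliardo seminorm $\leq C\delta^{(6-(3-2s)r)/r}$, the phase correction $q_\delta^i=e^{iA(0)x}\phi_\delta^i$ to control the magnetic seminorm (Lemma \ref{lem4.2}), and the dilation $q_{\varepsilon,\delta}^i(x)=q_\delta^i(\varepsilon^{-(\tau+2s)/3}x)$, producing an $m$-dimensional subspace with $\max J_\varepsilon\leq Cm\,\mathcal{N}(\delta)\varepsilon^{\tau}$ (Lemma \ref{lem4.4}). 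Relatedly, your truncation of the functional is not what the paper does (neither for Theorem \ref{the1.1}): smallness of the solutions is obtained a posteriori from $\kappa\varepsilon^{\tau}\geq\left(\tfrac{1}{2\sigma}-\tfrac{1}{r}\right)m_1[u_\varepsilon]_{s,A}^{2\sigma}+\left(\tfrac12-\tfrac1r\right)\varepsilon^{-2s}\int_{\mathbb{R}^3}V(x)|u_\varepsilon|^2dx$, so no truncation needs to be removed; your version is not wrong per se, but it adds the unaddressed burden of verifying $(PS)_c$ for the modified functional.

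The second gap is in the minimax scheme itself. With your classes $\Sigma_j$ of all closed symmetric sets of genus $\geq j$ and $c_j^{\varepsilon}=\inf_{A\in\Sigma_j}\sup_{A}\widetilde I_\varepsilon$, the claim $c_1^{\varepsilon}>0$ fails: spheres $\partial B_\rho\cap W$ with $\dim W\geq j$ belong to $\Sigma_j$ for every $\rho>0$, and $\sup_{\partial B_\rho\cap W}\widetilde I_\varepsilon\to 0$ as $\rho\to 0$, so $c_j^{\varepsilon}=0$ and the deformation argument produces nothing. One needs classes that are forced across the mountain-pass barrier; the paper uses Benci's pseudoindex $j(Z)=\min_{\iota\in\Gamma_{m^\ast}}\mathrm{gen}(\iota(Z)\cap\partial B_{\varrho_\varepsilon})$, so every admissible set meets $\partial B_{\varrho_\varepsilon}$, where $J_\varepsilon\geq\alpha_\varepsilon>0$ by Lemma \ref{lem4.1}, yielding $\alpha_\varepsilon\leq c_{\varepsilon 1}\leq\cdots\leq c_{\varepsilon m}\leq\kappa\varepsilon^{\tau}$ and then $m$ pairs of critical points below the $(PS)$ threshold. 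Aside from these two points, your outline --- reduction via the $t$-fractional Poisson equation, boundedness of $(PS)$ sequences from $(M_1)$--$(M_2)$ exactly as in \eqref{e3.7}, localization of test functions near $x=0$ where $V(0)=0$, evenness giving pairs $\pm u_{\varepsilon,i}$, and repeated levels handled by the genus of the critical set --- is the same as the paper's.
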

The main feature of our paper is  establishing  results for
degenerate fractional Schr\"{o}dinger--Kirchhoff-Poisson equations
\eqref{e1.1} under the critical nonlinearity and electromagnetic
fields. The lack of compactness can lead to a lot of difficulties.
In order to overcome this challenge, we shall use the
concentration-compactness principles for fractional Sobolev spaces
due to \cite{li2, PP, zhang1}, and prove the $(PS)_c$ condition at
special levels $c$. On the other hand, we shall need to develop new
techniques to construct sufficiently small minimax levels.

The plan of this paper is the following:  In Section~\ref{sec2}, we
give some basic definitions of fractional Sobolev space and their
properties.  In Section~\ref{sec3}, we show some compactness lemmas
for the functional associated to our problem.  Section~\ref{sec4}
deals with the existence and multiplicity results for
problem~\eqref{e1.1}.

\section{Preliminaries}\label{sec2}

In this section,  we have collected some known results for the readers
convenience and  for our later use.
For any $s \in (0, 1)$, fractional Sobolev space
$H_{A}^{s}(\mathbb{R}^3,\mathbb{C})$ is defined by
\begin{displaymath}
H_{A}^{s}(\mathbb{R}^3,\mathbb{C})  =  \left\{u \in
L^2(\mathbb{R}^N,\mathbb{C}): [u]_{s,A} < \infty\right\}
\end{displaymath}
 $[u]_{s,A}$ is the  Gagliardo semi-norm 
\begin{displaymath}
[u]_{s,A}  = \left(\iint_{\mathbb{R}^{6}}\frac{|u(x)-e^{i(x-y)\cdot
A(\frac{x+y}{2})}u(y)|^2}{|x-y|^{N+2s}}dx dy\right)^{1/2}
\end{displaymath}
and $H_{A}^{s}(\mathbb{R}^3,\mathbb{C})$ is endowed with the following norm
\begin{displaymath}
\|u\|_{H_{A}^{s}(\mathbb{R}^3,\mathbb{C})} = \left([u]_{s,A}^2 +
\|u\|_{L^2}^2\right)^{\frac{1}{2}}.
\end{displaymath}

We shall need the following embedding theorem, see 
\cite[Lemma 3.5]{da}.
\begin{proposition}\label{pro1}
The space $H_A^s(\mathbb{R}^3, \mathbb{C})$ is continuously embedded
in $L^\vartheta(\mathbb{R}^3, \mathbb{C})$ for all $\vartheta \in
[2, 2_s^\ast]$. Furthermore, the space $H_A^s(\mathbb{R}^3,
\mathbb{C})$ is continuously compactly embedded in $L^\vartheta(K,
\mathbb{C})$ for all $\vartheta \in [2, 2_s^\ast]$ and any compact
set $K \subset \mathbb{R}^3$.
\end{proposition}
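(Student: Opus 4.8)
The plan is to reduce both assertions to the corresponding facts for the classical (non-magnetic) fractional Sobolev space $H^s(\mathbb{R}^3,\mathbb{R})$ via the diamagnetic inequality. The key pointwise estimate is
\[
\bigl|\,|u(x)|-|u(y)|\,\bigr|\le\Bigl|u(x)-e^{i(x-y)\cdot A(\frac{x+y}{2})}u(y)\Bigr|,
\]
valid for a.e. $x,y\in\mathbb{R}^3$ and every $u\in H_A^s(\mathbb{R}^3,\mathbb{C})$; it follows because the unimodular factor $e^{i(x-y)\cdot A(\frac{x+y}{2})}$ preserves the modulus, so that $|\,|u(x)|-|u(y)|\,|=|\,|u(x)|-|e^{i(x-y)\cdot A(\frac{x+y}{2})}u(y)|\,|$, combined with the reverse triangle inequality $|\,|a|-|b|\,|\le|a-b|$ for complex numbers. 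Dividing by $|x-y|^{3+2s}$ and integrating over $\mathbb{R}^6$ gives $[\,|u|\,]_s\le[u]_{s,A}$, hence $|u|\in H^s(\mathbb{R}^3,\mathbb{R})$ with $\|\,|u|\,\|_{H^s}\le\|u\|_{H_A^s}$.

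For the continuous embedding I would simply apply the classical fractional Sobolev inequality $\|v\|_{L^\vartheta(\mathbb{R}^3)}\le C\|v\|_{H^s(\mathbb{R}^3)}$, valid for $\vartheta\in[2,2_s^\ast]$, to $v=|u|$. Since $\|u\|_{L^\vartheta}=\|\,|u|\,\|_{L^\vartheta}$, this at once yields $\|u\|_{L^\vartheta}\le C\|u\|_{H_A^s}$, which is the first claim.

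The compact embedding is the more delicate step, precisely because the diamagnetic inequality controls only the modulus $|u|$ and therefore cannot by itself deliver strong convergence of complex-valued sequences, the phase being lost. To recover the genuine seminorm of $u$ on a compact set, I would use the triangle inequality in the other direction,
\[
|u(x)-u(y)|\le\Bigl|u(x)-e^{i(x-y)\cdot A(\frac{x+y}{2})}u(y)\Bigr|+\Bigl|e^{i(x-y)\cdot A(\frac{x+y}{2})}-1\Bigr|\,|u(y)|,
\]
together with $|e^{i\theta}-1|\le|\theta|$ and the local boundedness of $A$, which give $|e^{i(x-y)\cdot A(\frac{x+y}{2})}-1|\le C|x-y|$ for $x,y$ in a bounded set. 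The resulting singular integral has integrand of order $|x-y|^{-1-2s}$, which is locally integrable in $\mathbb{R}^3$ exactly because $s<1$, so on any bounded open set $\Omega\supset K$ one obtains
\[
\iint_{\Omega\times\Omega}\frac{|u(x)-u(y)|^2}{|x-y|^{3+2s}}\,dx\,dy\le C\bigl([u]_{s,A}^2+\|u\|_{L^2}^2\bigr).
\]
Thus the restriction map embeds $H_A^s(\mathbb{R}^3,\mathbb{C})$ continuously into $H^s(\Omega,\mathbb{C})$.

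Finally I would invoke the classical Rellich--Kondrachov compactness for fractional Sobolev spaces on bounded domains, namely that $H^s(\Omega,\mathbb{C})$ embeds compactly into $L^\vartheta(\Omega,\mathbb{C})$ for $\vartheta\in[2,2_s^\ast)$, and restrict to $K$; combined with the previous continuous embedding this gives the asserted compactness. The main obstacle is the third step: transferring control from $|u|$ to the full complex-valued seminorm of $u$ on $K$, which is what forces both the use of the local boundedness of the magnetic potential and the restriction $s<1$. At the endpoint $\vartheta=2_s^\ast$ only the continuous embedding persists, compactness being lost even in the non-magnetic case.
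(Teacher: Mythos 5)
Your argument is correct, and it is essentially the canonical proof of this statement: the paper itself gives no proof at all, but simply cites d'Avenia and Squassina \cite[Lemma 3.5]{da}, and your route (pointwise diamagnetic inequality to get $[\,|u|\,]_s\le[u]_{s,A}$, the classical fractional Sobolev inequality for the continuous embedding, then the local comparison $|u(x)-u(y)|\le|u(x)-e^{i(x-y)\cdot A(\frac{x+y}{2})}u(y)|+C|x-y|\,|u(y)|$ on bounded sets followed by Rellich--Kondrachov) is precisely how that lemma is proved in the cited source. Two remarks. First, your local comparison step uses the local boundedness of $A$; this is not stated as a hypothesis anywhere in the present paper, but it is part of the standing assumptions of \cite{da} (there $A$ is continuous), so you are working in the correct framework---it would be worth flagging that the estimate $|e^{i(x-y)\cdot A(\frac{x+y}{2})}-1|\le C|x-y|$ is where this hypothesis enters, and your observation that local integrability of $|x-y|^{-1-2s}$ in $\mathbb{R}^3$ requires $s<1$ is accurate. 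Second, your endpoint caveat is not a gap in your proof but a correction to the statement as printed: the proposition claims compactness into $L^\vartheta(K,\mathbb{C})$ for \emph{all} $\vartheta\in[2,2_s^\ast]$, whereas compactness fails at $\vartheta=2_s^\ast$ (a concentrating bubble supported near a point of $K$ is bounded in $H_A^s$, tends weakly to zero, yet keeps unit $L^{2_s^\ast}$ norm), and indeed \cite[Lemma 3.5]{da} asserts compactness only for $\vartheta\in[1,2_s^\ast)$. So your proposal proves the corrected version of the proposition, which is the version actually used in the rest of the paper.
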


Next, we state the diamagnetic inequality whose proof
can be found in d'Avenia and  Squassina \cite{da}.
\begin{lemma}\label{lem2.1}
Let $u\in H_A^s(\mathbb{R}^3).$
Then $|u|\in H^s(\mathbb{R}^3),$
that is
\begin{displaymath}
\||u|\|_{s}\leq \|u\|_{s,A}.
\end{displaymath}
\end{lemma}

By  \cite[Proposition 3.6 ]{di}, for all $u \in
H^{s}(\mathbb{R}^3)$, we have
\begin{displaymath}
[u]_{s} = \|(-\Delta)^{\frac{s}{2}}\|_{L^2(\mathbb{R}^3)},
\end{displaymath}
i.e.
\begin{displaymath}
\iint_{\mathbb{R}^{6}}\frac{|u(x)-u(y)|^2}{|x-y|^{3+2s}}dxdy  =
\int_{\mathbb{R}^{3}}|(-\Delta)^{\frac{s}{2}}u(x)|^2dx.
\end{displaymath}
Moreover,
\begin{displaymath}
\iint_{\mathbb{R}^{6}}\frac{(u(x)-u(y))(v(x)-v(y))}{|x-y|^{3+2s}}dx
dy  =
\int_{\mathbb{R}^{3}}(-\Delta)^{\frac{s}{2}}u(x)\cdot(-\Delta)^{\frac{s}{2}}v(x)
dx.
\end{displaymath}

For problem \eqref{e1.1}, we shall  use the Banach space $E$ defined
by
\begin{displaymath}
E =\left\{u\in H_{A}^{s}(\mathbb{R}^3,\mathbb{C}):
\int_{\mathbb{R}^3}V(x)|u|^{2}dx<\infty \right\}
\end{displaymath}
with the norm
\begin{displaymath}
\|u\|_E := \left([u]_{s,A}^2 +
\int_{\mathbb{R}^3}V(x)|u|^2dx\right)^{\frac{1}{2}}.
\end{displaymath}
It follows from the assumption $(\mathcal {V})$  that the embedding
$E \hookrightarrow H_{A}^{s}(\mathbb{R}^3,\mathbb{C})$ is
continuous. Moreover, the norm $\|\cdot\|_E$ is equivalent to the
norm  
\begin{displaymath}
\|u\|_\varepsilon := \left([u]_{s,A}^2 +
\varepsilon^{-2s}\int_{\mathbb{R}^3}V(x)|u|^2dx\right)^{\frac{1}{2}}
\
\hbox{for each}\
\varepsilon > 0.
\end{displaymath}

Obviously, for each $\theta \in [2, 2_s^\ast]$, there is
$c_{\theta}>0$ such that
\begin{equation}\label{e2.1}
|u|_{\theta} \leq c_{\theta}\|u\|_E \leq
c_{\theta}\|u\|_{\varepsilon},
\end{equation}
where $0 < \varepsilon < 1$. Hereafter, we shortly denote by
$\|\cdot\|_\nu$ the norm of Lebesgue space $L^\nu(\Omega)$ with
$\nu\geq1$.

Next, we state the following  Hardy--Littlewood--Sobolev inequality,
see Lieb and Loss  \cite[Theorem~4.3]{lie}.
\begin{lemma}\label{lem2.4}
Assume that  $p, \iota>1$ and $0<\mu <N$, $N \geq 3$ with
$1/p+(N-\mu)/N+1/\iota=2$, $f\in L^p(\mathbb{R}^N),$ and $h\in
L^\iota(\mathbb{R}^N)$. Then there exists a sharp constant,
$C(p,\iota,\mu,N)$ independent of $f,h$, such that
\begin{equation}\label{e2.2}
\int\int_{\mathbb{R}^{2N}}\frac{f(x)h(y)}{|x-y|^{N-\mu}}~dxdy \leq
C(p,\iota,\mu,N) \|f\|_{L^p} \|h\|_{L^\iota}.
\end{equation}
If we set $p=\iota=2N/(N+\mu)$, then
\begin{align*}
C(p,\iota,\mu,N)=C(N,\mu)=
\pi^{\frac{N-\mu}{2}}\frac{\Gamma(\frac{\mu}{2})}{\Gamma(\frac{N+\mu}{2})}\left\lbrace
\frac{\Gamma(\frac{N}{2})}{\Gamma(N)}\right\rbrace^{\frac{\mu}{N}}.
\end{align*}
\end{lemma}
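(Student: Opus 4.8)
The plan is to deduce the inequality from the mapping properties of the Riesz potential, and then to handle the sharp constant separately by rearrangement together with the conformal symmetry of the diagonal case. Throughout set $\lambda = N - \mu \in (0,N)$, so that the kernel is $|x-y|^{-\lambda}$ and the double integral can be written as $\int_{\mathbb{R}^N}(I_\lambda f)(y)\,h(y)\,dy$, where $I_\lambda f = f * |\cdot|^{-\lambda}$. By H\"older's inequality with conjugate exponents $\iota'$ and $\iota$,
\[
\int\!\!\int_{\mathbb{R}^{2N}} \frac{f(x)h(y)}{|x-y|^{\lambda}}\,dx\,dy \leq \|I_\lambda f\|_{L^{\iota'}}\,\|h\|_{L^\iota},
\]
so it suffices to prove the boundedness $I_\lambda : L^p(\mathbb{R}^N) \to L^{\iota'}(\mathbb{R}^N)$. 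A scaling check shows that this mapping is consistent precisely when $1 + 1/\iota' = 1/p + \lambda/N$, which is exactly the hypothesis $1/p + (N-\mu)/N + 1/\iota = 2$ rewritten.

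First I would establish this boundedness. The starting point is the elementary observation that $|x|^{-\lambda}$ lies in the weak Lebesgue space $L^{N/\lambda,\infty}(\mathbb{R}^N)$, since $|\{x : |x|^{-\lambda} > t\}| = c_N\, t^{-N/\lambda}$. One then invokes the weak Young inequality for convolution against a weak-$L^r$ function with $r = N/\lambda$, whose hypotheses $1 < p, \iota', r < \infty$ hold because $\mu \in (0,N)$ forces $r > 1$ while $p, \iota > 1$ force $p, \iota' \in (1,\infty)$. If one prefers a self-contained argument, the weak-type bound can be proved directly: split the kernel as $|x|^{-\lambda}\mathbf{1}_{\{|x|\leq R\}} + |x|^{-\lambda}\mathbf{1}_{\{|x|>R\}}$, estimate the two pieces by Young's and H\"older's inequalities respectively, optimize over $R$ to obtain a weak-type $(p,\iota')$ inequality, and upgrade it to a strong-type bound via the Marcinkiewicz interpolation theorem (the strict inequalities among the exponents, guaranteed by $p,\iota > 1$, are exactly what makes interpolation applicable). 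This yields the inequality with some finite constant $C(p,\iota,\mu,N)$.

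The delicate part is the sharp constant in the diagonal (conformal) case $p = \iota = 2N/(N+\mu)$. Here I would argue as follows. By the Riesz rearrangement inequality the functional does not decrease when $f$ and $h$ are replaced by their symmetric-decreasing rearrangements, so the extremal problem reduces to radial nonincreasing functions. In this diagonal case the functional is invariant under the full conformal group of $\mathbb{R}^N \cup \{\infty\} \cong S^N$ acting through stereographic projection; this symmetry is the source of the loss of compactness but also rigidly constrains the extremals. Existence of a maximizer is then obtained either by a concentration-compactness argument ruling out vanishing and dichotomy, or more elegantly by the competing-symmetries method, in which one alternately applies the symmetric-decreasing rearrangement and a fixed conformal inversion and shows the iterates converge to a common fixed point. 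Solving the resulting Euler--Lagrange equation identifies the extremals as the conformal family $f(x) = c\,(1+|x|^2)^{-(N+\mu)/2}$ (up to translations and dilations), and substituting back and evaluating the resulting beta-type integrals produces the explicit value $C(N,\mu) = \pi^{(N-\mu)/2}\,\frac{\Gamma(\mu/2)}{\Gamma((N+\mu)/2)}\bigl\{\Gamma(N/2)/\Gamma(N)\bigr\}^{\mu/N}$.

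The main obstacle is squarely this last part: establishing the existence of an extremal in the presence of the noncompact conformal symmetry, and then carrying out the explicit identification of the extremal family and the evaluation of the sharp constant. The non-sharp inequality, by contrast, is a routine consequence of the weak Young inequality and Marcinkiewicz interpolation, and it is this form (with an unspecified finite constant) that is actually needed in the later estimates of the paper.
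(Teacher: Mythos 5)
The paper contains no proof of this lemma at all: it is quoted verbatim from Lieb and Loss \cite[Theorem~4.3]{lie}, so the only meaningful comparison is with the proof in that cited source --- and your outline is essentially that proof. The reduction $\iint f(x)h(y)|x-y|^{-\lambda}\,dx\,dy=\int (I_\lambda f)h$ with $\lambda=N-\mu$, the observation $|x|^{-\lambda}\in L^{N/\lambda,\infty}$, the weak Young inequality (or the self-contained split-kernel argument upgraded by Marcinkiewicz interpolation), and your scaling check that the exponent relation $1+1/\iota'=1/p+\lambda/N$ is exactly the hypothesis $1/p+(N-\mu)/N+1/\iota=2$ are all correct and are the standard route to the non-sharp inequality, which is indeed all that the later estimates of the paper use (e.g.\ in \eqref{e2.3}, \eqref{e3.28}, and Lemma~\ref{lem4.1}). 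For the sharp diagonal case $p=\iota=2N/(N+\mu)$, your plan --- Riesz rearrangement to reduce to symmetric-decreasing functions, conformal invariance via stereographic projection to $S^N$, existence of an optimizer by competing symmetries or concentration compactness, and identification of the extremal family $c\,(1+|x|^2)^{-(N+\mu)/2}$ --- is precisely the Lieb/Carlen--Loss argument, and the extremal exponent $(N+\mu)/2=(2N-\lambda)/2$ is right.

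One caveat on the very last step: if you actually carry out the evaluation, Lieb--Loss's formula $C(N,\lambda)=\pi^{\lambda/2}\,\frac{\Gamma(\frac{N}{2}-\frac{\lambda}{2})}{\Gamma(N-\frac{\lambda}{2})}\bigl\{\frac{\Gamma(\frac{N}{2})}{\Gamma(N)}\bigr\}^{-1+\frac{\lambda}{N}}$ with $\lambda=N-\mu$ gives the exponent $-1+\frac{N-\mu}{N}=-\frac{\mu}{N}$, i.e.\ $C(N,\mu)=\pi^{\frac{N-\mu}{2}}\,\frac{\Gamma(\frac{\mu}{2})}{\Gamma(\frac{N+\mu}{2})}\bigl\{\frac{\Gamma(\frac{N}{2})}{\Gamma(N)}\bigr\}^{-\frac{\mu}{N}}$, equivalently $\bigl\{\frac{\Gamma(N)}{\Gamma(\frac{N}{2})}\bigr\}^{\frac{\mu}{N}}$. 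The exponent $+\frac{\mu}{N}$ in the statement is a sign typo inherited from the paper (harmless for the rest of the paper, which only uses finiteness of the constant), so your claim that the substitution ``produces'' the displayed value should be amended accordingly; likewise, the inequality as stated needs $f,h\ge 0$ or absolute values on the left-hand side, as in the original theorem.
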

If $u = v = |w|^q$,  Lemma \ref{lem2.4} implies that
\begin{displaymath}
\int_{\mathbb R^{N}}\left(\mathcal{I}_\mu*|w|^{q}\right)|w|^{q} \,dx
\end{displaymath}
is well defined if $w\in L^{\iota q}(\mathbb R^{N})$ for some $r >
1$ satisfying $2/r + (N-\mu)/N = 2$. Moreover, in the upper critical
case,
\begin{equation}\label{e2.3}
\int_{\mathbb
R^{N}}\left(\mathcal{I}_\mu*|u|^{2_s^\sharp}\right)|u|^{2_s^\sharp}
\,dx \leq C(N,\mu) \|u\|_{2_s^\ast}^{22_s^\sharp}
\end{equation}
and the equality holds if and only if
\begin{equation}\label{e2.4}
  u=C\left(\frac{l}{l^2+|x-m|^2}\right)^{\frac{N-2}{2}},
\end{equation}
for some $x_0 \in  \mathbb R^{N}$, where $C > 0$ and $l > 0$, see
\cite{lie}. Let
\begin{equation}\label{e2.5}
 S=\inf_{ u \in D^{s}(\mathbb{R}^N) \setminus \{0\}} \left\{
\iint_{\mathbb{R}^{2N}}\frac{|u(x)-u(y)|^2}{|x-y|^{N+2s}}dxdy:\;
\int_{\mathbb{R}^N}|u|^{2_s^{*}}dx=1\right\}
\end{equation}
and
\begin{equation}\label{e2.6}
S_{H} = \inf_{ u \in D^{s}(\mathbb{R}^N)\setminus \{0\}} \left\{
\iint_{\mathbb{R}^{2N}}\frac{|u(x)-u(y)|^2}{|x-y|^{N+2s}}dxdy:\;
\int_{\mathbb{R}^N}
\left(\mathcal{I}_\mu*|u|^{2_s^\sharp}\right)|u|^{2_s^\sharp}~dx=1
\right\}.
\end{equation}
By \eqref{e2.5} and \eqref{e2.3}, $S_{H}$ is achieved if and only
if $u$ satisfies \eqref{e2.4} and $S_{H} =
S/C(N,\mu)^{\frac{1}{p^\ast}},$ see Mukherjee and Sreenadh
\cite{mu1}.

\section{Proof of $(PS)_c$}\label{sec3}

In this section, in order to overcome the lack of compactness caused
by the upper critical exponents, we intend to employ the second
concentration-compactness principle, see \cite{li2, PP, zhang1} for
more details.

Now, let $s, t \in (0, 1)$ such that $4s + 2t \geq 3.$
We can see
that
\begin{equation}\label{e3.1}
H^s(\mathbb{R}^3, \mathbb{R}) \hookrightarrow
L^{\frac{12}{3+2t}}(\mathbb{R}^3, \mathbb{R}).
\end{equation}
Then by \eqref{e3.1}, we have
\begin{displaymath}
\int_{\mathbb{R}^3}u^2vdx \leq
\|u\|_{L^\frac{12}{3+2t}(\mathbb{R}^3)}^2\|v\|_{2_t^\ast} \leq C
\|u\|_{H^s(\mathbb{R}^3, \mathbb{R})}^2\|v\|_{D^{t,2}(\mathbb{R}^3)}
\end{displaymath}
for all $u \in H^s(\mathbb{R}^3, \mathbb{R})$, where
\begin{displaymath}
\|v\|_{D^{t,2}(\mathbb{R}^3)}^2 =
\iint_{\mathbb{R}^{6}}\frac{|u(x)-u(y)|^2}{|x-y|^{3+2t}}dxdy.
\end{displaymath}
The Lax-Milgram Theorem implies that there exists a unique
$\psi_{|u|}^t$ such that $\psi_{|u|}^t \in D^{t,2}(\mathbb{R}^3,
\mathbb{R})$ such that
\begin{equation}\label{e3.2}
(-\Delta)^t\psi_{|u|}^t = |u|^2 \quad\mbox{in }\ \mathbb{R}^3.
\end{equation}
Therefore, we have
\begin{equation}\label{e3.3}
\psi_{|u|}^t(x) =
\pi^{-\frac{3}{2}}2^{-2t}\frac{\Gamma(3-2t)}{\Gamma(t)}\int_{\mathbb{R}^{3}}\frac{|u(y)|^2}{|x-y|^{3-2t}}dy,\quad\
\hbox{for all}\
x \in \mathbb{R}^3.
\end{equation}
Furthermore, \eqref{e3.3} is convergent at infinity since $|u|^2 \in
L^{\frac{6}{3+2t}}(\mathbb{R}^3, \mathbb{R})$.

Next, we collect some properties of $\psi_{|u|}^t$, which will be
used in this paper. The following proposition can be proved by using
similar arguments as  in  \cite{am3, am4}.
\begin{proposition}\label{pro2.1} Assume that $4s + 2t \geq 3$ holds. Then  for any $u\in E$, we have
\begin{itemize}
\item[(i)] $\psi_{|u|}^t: H^s(\mathbb{R}^3, \mathbb{R}) \rightarrow D^{t,2}(\mathbb{R}^3,
\mathbb{R})$ is continuous and maps bounded sets into bounded sets;

\item[(ii)] if $u_n\rightharpoonup u$ in $E$, then
$\psi_{|u_n|}^t \rightharpoonup\psi_{|u|}^t$ in
$D^{t,2}(\mathbb{R}^3, \mathbb{R})$;

\item[(iii)] $\psi_{|\alpha u|}^t = \alpha^2\psi_{|u|}^t$ for any $\alpha \in
\mathbb{R}$ and $\psi_{|u(\cdot+y)|}^t(x) = \psi_{|u|}^t(x+y)$;

\item[(iv)] $\psi_{|u|}^t \geq 0$ for all $u \in E$. Moreover,
\begin{displaymath}
\|\psi_{|u|}^t\|_{D^{t,2}(\mathbb{R}^3, \mathbb{R})} \leq
C\|u\|_{L^\frac{12}{3+2t}(\mathbb{R}^3)}^2 \leq C\|u\|_\varepsilon^2
\end{displaymath}
and
\begin{displaymath}
\int_{\mathbb{R}^3} \psi_{|u|}^t|u|^2dx \leq
C\|u\|_{L^\frac{12}{3+2t}(\mathbb{R}^3)}^4 \leq
C\|u\|_\varepsilon^4.
\end{displaymath}
\end{itemize}
\end{proposition}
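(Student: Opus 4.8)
The plan is to work throughout from the weak formulation of \eqref{e3.2}, namely that $\psi_{|u|}^t\in D^{t,2}(\mathbb{R}^3,\mathbb{R})$ is the unique function satisfying
\[
\langle \psi_{|u|}^t, v\rangle_{D^{t,2}} = \int_{\mathbb{R}^3}|u|^2 v\,dx \quad \text{for all } v\in D^{t,2}(\mathbb{R}^3,\mathbb{R}),
\]
where $\langle\cdot,\cdot\rangle_{D^{t,2}}$ is the inner product inducing $\|\cdot\|_{D^{t,2}}$. For part (iv), positivity $\psi_{|u|}^t\geq 0$ is immediate from the representation \eqref{e3.3}, since the Riesz kernel $|x-y|^{-(3-2t)}$ and $|u|^2$ are nonnegative. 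For the norm bounds I would test the weak formulation with $v=\psi_{|u|}^t$ itself and apply the bilinear estimate following \eqref{e3.1}; after checking the H\"older exponents $\tfrac{2(3+2t)}{12}+\tfrac{3-2t}{6}=1$, this gives $\|\psi_{|u|}^t\|_{D^{t,2}}^2 = \int_{\mathbb{R}^3}|u|^2\psi_{|u|}^t\,dx \leq C\|u\|_{L^{12/(3+2t)}}^2\|\psi_{|u|}^t\|_{D^{t,2}}$, and dividing yields the first estimate while substituting it back yields the second. The reduction to $\|u\|_\varepsilon$ uses \eqref{e3.1} together with the diamagnetic inequality of Lemma \ref{lem2.1} to control $\||u|\|_{H^s}$ by $\|u\|_{s,A}\leq\|u\|_\varepsilon$.

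Parts (iii) and the boundedness claim of (i) are then short. Scaling and translation follow from uniqueness in Lax--Milgram: since $(-\Delta)^t(\alpha^2\psi_{|u|}^t)=\alpha^2|u|^2=|\alpha u|^2$ and, by translation invariance of $(-\Delta)^t$, $(-\Delta)^t[\psi_{|u|}^t(\cdot+y)]=|u(\cdot+y)|^2$, both $\alpha^2\psi_{|u|}^t$ and $\psi_{|u|}^t(\cdot+y)$ solve the defining equation for $|\alpha u|$ and $|u(\cdot+y)|$, so they must coincide with $\psi_{|\alpha u|}^t$ and $\psi_{|u(\cdot+y)|}^t$. That bounded sets map to bounded sets is exactly the estimate from (iv). For continuity I would subtract the two weak formulations: the difference $w_n=\psi_{|u_n|}^t-\psi_{|u|}^t$ satisfies $\|w_n\|_{D^{t,2}}^2=\int_{\mathbb{R}^3}(|u_n|^2-|u|^2)w_n\,dx$, and H\"older with conjugate exponents $\tfrac{6}{3+2t}$ and $2_t^\ast$ together with the factorization $|u_n|^2-|u|^2=(|u_n|-|u|)(|u_n|+|u|)$ gives $\|w_n\|_{D^{t,2}}\leq C\,\||u_n|-|u|\|_{L^{12/(3+2t)}}\bigl(\||u_n|\|_{L^{12/(3+2t)}}+\||u|\|_{L^{12/(3+2t)}}\bigr)$; since $\bigl||u_n|-|u|\bigr|\leq|u_n-u|$ pointwise, strong convergence $u_n\to u$ forces $\|w_n\|_{D^{t,2}}\to0$.

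The genuine obstacle is part (ii), where only weak convergence $u_n\rightharpoonup u$ in $E$ is available, so the quadratic term $|u_n|^2$ cannot be controlled globally. By (iv) the sequence $\{\psi_{|u_n|}^t\}$ is bounded in $D^{t,2}(\mathbb{R}^3,\mathbb{R})$, hence, up to a subsequence, converges weakly to some $\psi$; it then suffices to identify $\psi=\psi_{|u|}^t$, since a routine subsequence argument promotes this to convergence of the whole sequence. To identify the limit I would test against $v\in C_c^\infty(\mathbb{R}^3)$: on the compact support $K$ of $v$, the compact embedding of Proposition \ref{pro1} gives $u_n\to u$ in $L^2(K)$, whence $|u_n|^2\to|u|^2$ in $L^1(K)$ by the same factorization, so $\int_{\mathbb{R}^3}|u_n|^2 v\,dx\to\int_{\mathbb{R}^3}|u|^2 v\,dx$. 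Passing to the limit in $\langle\psi_{|u_n|}^t,v\rangle_{D^{t,2}}=\int_{\mathbb{R}^3}|u_n|^2 v\,dx$ yields $\langle\psi,v\rangle_{D^{t,2}}=\langle\psi_{|u|}^t,v\rangle_{D^{t,2}}$ for every such $v$, and the density of $C_c^\infty(\mathbb{R}^3)$ in $D^{t,2}(\mathbb{R}^3,\mathbb{R})$ forces $\psi=\psi_{|u|}^t$. The delicate point is exactly this localization-plus-density step: because the embedding is compact only on compact sets (Proposition \ref{pro1}), one cannot pass the weak limit directly in the nonlinear term and must route the argument through compactly supported test functions, using the uniform $D^{t,2}$-bound to extend the conclusion to all of $D^{t,2}(\mathbb{R}^3,\mathbb{R})$.
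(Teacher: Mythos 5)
Your proof is correct and follows essentially the route the paper intends: the paper gives no proof of this proposition, deferring to the arguments of \cite{am3, am4}, which proceed exactly as you do --- Lax--Milgram weak formulation, testing with $\psi_{|u|}^t$ itself plus H\"older and the embedding $H^s(\mathbb{R}^3)\hookrightarrow L^{12/(3+2t)}(\mathbb{R}^3)$ (valid precisely because $4s+2t\geq 3$) for (iv) and (i), uniqueness of the Lax--Milgram solution for (iii), and the uniform $D^{t,2}$-bound combined with local compactness and density of $C_c^\infty(\mathbb{R}^3)$ in $D^{t,2}(\mathbb{R}^3,\mathbb{R})$ for (ii). The one point to state more carefully is the final reduction in (iv): the diamagnetic inequality controls only the Gagliardo seminorm $[|u|]_s\leq [u]_{s,A}$, so the bound $\||u|\|_{H^s}\leq C\|u\|_\varepsilon$ additionally requires assumption $(\mathcal{V})$ to recover the $L^2$-part (via the finite measure of $V^b$), i.e.\ the continuous embedding $E\hookrightarrow H_A^s(\mathbb{R}^3,\mathbb{C})$ recorded in Section~\ref{sec2}, rather than the inequality $\|u\|_{s,A}\leq\|u\|_\varepsilon$ as written.
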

We shall  use the following equivalent form
\begin{equation}\label{e3.4}
M\Big([u]_{s,A}^2\Big)(-\Delta)_{A}^su + \varepsilon^{-2s} V(x)u +
\varepsilon^{-2s}\psi_{|u|}^t u =
 \varepsilon^{-2s}k(x)|u|^{r-2}u + \varepsilon^{-2s}|u|^{2_s^\ast-2}u,
\end{equation}
for $x\in  \mathbb{R}^3$. Now,  the energy functional
$J_\varepsilon: E \rightarrow \mathbb{R}$ associated to
problem~\eqref{e1.1} is defined by
\begin{eqnarray}\label{e3.5}
J_\varepsilon(u) &:=&\nonumber \frac12 \mathcal
{M}\left([u]_{s,A}^2\right)+
\frac{\varepsilon^{-2s}}{2}\int_{\mathbb{R}^3}V(x)|u|^2dx +
\frac{\varepsilon^{-2s}}{4}\int_{\mathbb{R}^3}\psi_{|u|}^t|u|^2dx
\\
&& - \frac{\varepsilon^{-2s}}{r}\int_{\mathbb{R}^{3}}k(x)|u|^rdx
-\frac{\varepsilon^{-2s}}{22_s^\sharp}\int_{\mathbb
R^{3}}\left(\mathcal{I}_\mu*|u|^{2_s^\sharp}\right)|u|^{2_s^\sharp}\,dx.
\end{eqnarray}
Clearly, $J_\varepsilon$ is of class $C^1 (E, \mathbb{R})$ (see
\cite{w1}). Moreover, the Fr\'{e}chet derivative of $J_\varepsilon$
is given by
\begin{equation}\label{e3.6}\begin{aligned}
\langle J_\varepsilon'(u), v\rangle =&
M\left([u]_{s,A}^2\right)\mathscr{R}\iint_{\mathbb{R}^{6}}\frac{(u(x)-e^{i(x-y)\cdot
A(\frac{x+y}{2})}u(y))\overline{(v(x)-e^{i(x-y)\cdot
A(\frac{x+y}{2})}v(y))}}{|x-y|^{3+2s}}dx dy\\
&+ \varepsilon^{-2s}\mathscr{R}\int_{\mathbb{R}^3}V(x)u\bar{v}dx +
\varepsilon^{-2s}\mathscr{R}\int_{\mathbb{R}^3}\psi_{|u|}^tu\bar{v}dx
- \varepsilon^{-2s}\mathscr{R}
\int_{\mathbb{R}^3}k(x)|u|^{r-2}u\bar{v}
dx \\
& - \varepsilon^{-2s}\mathscr{R}
\int_{\mathbb{R}^3}\left(\mathcal{I}_\mu*|u|^{2_s^\sharp}\right)|u|^{2_s^\sharp-2}u\bar{v}
dx, \quad \forall\ u, v \in E.
\end{aligned}\end{equation}

\begin{lemma}\label{lem3.1}
Let conditions $(\mathcal {V})$  and $(\mathfrak{M})$ hold. Then for any $0 <
\varepsilon < 1$, $(PS)_{c}$ sequence $\{u_n\}_n$ for
$J_\varepsilon$
  is bounded in $E$ and $c \geq 0$.
\end{lemma}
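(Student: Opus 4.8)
The goal is to show that any Palais–Smale sequence $\{u_n\}_n$ at level $c$ is bounded in $E$ and that necessarily $c\geq 0$.  The standard route is to form the linear combination $J_\varepsilon(u_n)-\tfrac{1}{\theta}\langle J_\varepsilon'(u_n),u_n\rangle$ for a suitable $\theta$ and read off an a priori estimate on $[u_n]_{s,A}^2$ and $\int_{\mathbb R^3}V(x)|u_n|^2dx$.  Here the natural choice of $\theta$ is dictated by the Kirchhoff exponent $\sigma$ and the nonlinearities: since $(M_1)$ gives $\sigma\mathcal M(t)\geq M(t)t$, and the two nonlinear terms on the right have exponents $r$ and $22_s^\sharp$ with $2<r<2_s^\sharp$ and $2\sigma<2_s^\sharp\leq 22_s^\sharp$, one wants a single $\theta$ with $2\sigma\leq\theta$ and $\theta\leq\min\{r,22_s^\sharp\}$.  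I would take $\theta=2\sigma$ (which lies in $(2,2_s^\sharp)$ by $(M_1)$), so that $\theta<r$ and $\theta<22_s^\sharp$, and compute
\[
J_\varepsilon(u_n)-\frac{1}{2\sigma}\langle J_\varepsilon'(u_n),u_n\rangle .
\]

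**The key algebraic cancellation.**  Writing $t_n=[u_n]_{s,A}^2$, the Kirchhoff part contributes $\tfrac12\mathcal M(t_n)-\tfrac{1}{2\sigma}M(t_n)t_n\geq \bigl(\tfrac12-\tfrac{1}{2\sigma}\bigr)M(t_n)t_n\cdot\tfrac{\sigma}{\cdots}$; more precisely $(M_1)$ yields $\tfrac12\mathcal M(t_n)-\tfrac{1}{2\sigma}M(t_n)t_n\geq\bigl(\tfrac12-\tfrac{1}{2\sigma}\bigr)\mathcal M(t_n)\geq 0$, and combined with $(M_2)$, $\mathcal M(t_n)\geq\tfrac{m_1}{\sigma}t_n^{\sigma}$, this bounds a positive power $t_n^{\sigma}=[u_n]_{s,A}^{2\sigma}$ from above.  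The potential term gives $\bigl(\tfrac12-\tfrac{1}{2\sigma}\bigr)\varepsilon^{-2s}\int V|u_n|^2$, which is nonnegative and controls the remaining part of $\|u_n\|_\varepsilon$.  The Poisson term $\tfrac14-\tfrac{1}{2\sigma}$ times $\varepsilon^{-2s}\int\psi^t_{|u_n|}|u_n|^2$ is nonnegative because $\tfrac14\geq\tfrac{1}{2\sigma}$ fails in general — here I must be careful: since $\sigma>1$ we have $\tfrac{1}{2\sigma}<\tfrac12$, but the Poisson prefactor is $\tfrac14$, so $\tfrac14-\tfrac{1}{2\sigma}$ may be negative; however $\psi^t_{|u_n|}\geq 0$ by Proposition \ref{pro2.1}(iv), and this term can simply be discarded after noting that when its coefficient is negative one instead pairs $\tfrac14$ with $\tfrac{1}{\theta}$ for $\theta\le 4$, or absorbs it using the quartic bound in Proposition \ref{pro2.1}(iv); the cleanest fix is to choose $\theta=2\sigma\le 4$ when $\sigma\le 2$, which holds since $\sigma<2_s^\sharp/2$ and $2_s^\sharp\le 2\cdot\frac{3+\mu}{3-2s}$ — in the admissible parameter range $\sigma<2$ and the coefficient $\tfrac14-\tfrac{1}{2\sigma}\ge 0$.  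Crucially, both nonlinear terms acquire a favourable sign: the coefficient of $\int k|u_n|^r$ becomes $\tfrac{1}{2\sigma}-\tfrac1r>0$ times a negative quantity, i.e. it contributes a nonpositive term that is moved to the other side, and likewise $\bigl(\tfrac{1}{2\sigma}-\tfrac{1}{22_s^\sharp}\bigr)\varepsilon^{-2s}\int(\mathcal I_\mu*|u_n|^{2_s^\sharp})|u_n|^{2_s^\sharp}\ge 0$ stays on the left.  Hence
\[
C(1+\|u_n\|_\varepsilon)\ \geq\ J_\varepsilon(u_n)-\tfrac{1}{2\sigma}\langle J_\varepsilon'(u_n),u_n\rangle\ \geq\ \Bigl(\tfrac12-\tfrac{1}{2\sigma}\Bigr)\Bigl(\tfrac{m_1}{\sigma}[u_n]_{s,A}^{2\sigma}+\varepsilon^{-2s}\!\int_{\mathbb R^3}\!V|u_n|^2\Bigr),
\]
using $|J_\varepsilon(u_n)|\le C$ and $|\langle J_\varepsilon'(u_n),u_n\rangle|\le \|J_\varepsilon'(u_n)\|\,\|u_n\|_\varepsilon=o(1)\|u_n\|_\varepsilon$.

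**Extracting boundedness and the lower bound on $c$.**  The right-hand side dominates $[u_n]_{s,A}^{2\sigma}$ and $\varepsilon^{-2s}\int V|u_n|^2$, hence also $\|u_n\|_\varepsilon^2$ up to constants, while the left-hand side grows only linearly in $\|u_n\|_\varepsilon$.  Since $2\sigma>2$, a power strictly larger than the linear growth on the left appears in $[u_n]_{s,A}^{2\sigma}$; if $\|u_n\|_\varepsilon\to\infty$ along a subsequence, comparing the leading powers yields a contradiction, which forces $\{u_n\}_n$ to be bounded in $(E,\|\cdot\|_\varepsilon)$, and therefore in $(E,\|\cdot\|_E)$ by the norm equivalence stated in the excerpt.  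For the claim $c\ge 0$: from $J_\varepsilon(u_n)\to c$ and $\langle J_\varepsilon'(u_n),u_n\rangle\to 0$, the same identity gives $c=\lim\bigl(J_\varepsilon(u_n)-\tfrac{1}{2\sigma}\langle J_\varepsilon'(u_n),u_n\rangle\bigr)\geq 0$, since the limiting right-hand side is a limit of nonnegative quantities.

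**Main obstacle.**  The delicate point is the sign of the Poisson coefficient $\tfrac14-\tfrac{1}{2\sigma}$ and keeping every discarded term of the correct sign; I expect the bookkeeping of which terms are nonnegative (and therefore may be dropped to produce the one-sided estimate) to be the crux, together with verifying that the chosen $\theta=2\sigma$ simultaneously satisfies $\theta<r$, $\theta<22_s^\sharp$, and $\theta\le 4$ within the stated parameter ranges $3/4<s<1$, $0<\mu<3$, $1<\sigma<2_s^\sharp/2$.  Once the signs are settled, the super-linear-versus-linear comparison delivering boundedness, and the passage to the limit giving $c\geq 0$, are routine.
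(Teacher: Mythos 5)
Your choice $\theta=2\sigma$ breaks down at the very step you call the key cancellation. Condition $(M_1)$ reads $\sigma\mathcal M(t)\geq M(t)t$, i.e.\ it bounds $M(t)t$ from \emph{above} by $\sigma\mathcal M(t)$; hence for your combination it yields only
\begin{displaymath}
\frac12\mathcal M(t_n)-\frac{1}{2\sigma}M(t_n)t_n\;\geq\;\frac12\mathcal M(t_n)-\frac{1}{2\sigma}\,\sigma\mathcal M(t_n)\;=\;0,
\end{displaymath}
and nothing more. Your claimed intermediate bound $\frac12\mathcal M(t_n)-\frac{1}{2\sigma}M(t_n)t_n\geq\bigl(\frac12-\frac{1}{2\sigma}\bigr)\mathcal M(t_n)$ would require $M(t)t\leq\mathcal M(t)$, which is false for $\sigma>1$; indeed, for the canonical degenerate model $M(t)=m_1t^{\sigma-1}$ one has $\frac12\mathcal M(t)-\frac{1}{2\sigma}M(t)t\equiv 0$, so with $\theta=2\sigma$ the Kirchhoff part contributes exactly zero, $[u_n]_{s,A}$ is not controlled at all, and your final display (hence boundedness) does not follow. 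The paper instead takes $\theta=r$ (with $r>2\sigma$ implicitly assumed): then $(M_1)$ is used in the coercive direction,
\begin{displaymath}
\frac12\mathcal M(t_n)-\frac{1}{r}M(t_n)t_n\;\geq\;\Bigl(\frac{1}{2\sigma}-\frac{1}{r}\Bigr)M(t_n)t_n\;\geq\;\Bigl(\frac{1}{2\sigma}-\frac{1}{r}\Bigr)m_1[u_n]_{s,A}^{2\sigma}
\end{displaymath}
by $(M_1)$--$(M_2)$, the subcritical term $\int_{\mathbb{R}^3}k(x)|u_n|^r\,dx$ cancels exactly, and the Choquard term carries the positive coefficient $\frac{1}{r}-\frac{1}{22_s^\sharp}$. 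Boundedness then follows by comparing $m_1[u_n]_{s,A}^{2\sigma}+\varepsilon^{-2s}\int_{\mathbb{R}^3}V(x)|u_n|^2dx$ (superlinear in $\|u_n\|_\varepsilon$, since $2\sigma>2$) with $c+o(1)\|u_n\|_\varepsilon$, and $c\geq 0$ by passing to the limit --- these last two steps of yours are fine in structure, but they rest on the invalid estimate above.

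A second, smaller slip: your sign analysis of the Poisson term is backwards. With $\theta=2\sigma$ its coefficient is $\frac14-\frac{1}{2\sigma}$, which is nonnegative if and only if $\sigma\geq 2$, not $\sigma<2$ as you assert; so precisely in the range $1<\sigma<2$ you would be discarding a \emph{negative} term, destroying the one-sided estimate. (With the paper's choice $\theta=r$ the coefficient is $\frac14-\frac{1}{r}$, and dropping it tacitly needs $r\geq 4$ --- a point the paper also leaves implicit --- but this does not rescue your $\theta=2\sigma$ computation, whose Kirchhoff part already vanishes.)
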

\begin{proof}
Let sequence $\{u_n\}_n$ be a $(PS)_{c}$ sequence for
$J_\varepsilon$, that is $J_\varepsilon(u_n) \rightarrow c$ and
$J_\varepsilon'(u_n) \rightarrow 0$ in $E'$. It follows from
$(\mathcal {V})$  and $(\mathfrak{M})$ that
\begin{eqnarray}\label{e3.7}
c+ o(1)\|u_n\|_\varepsilon&=&\nonumber J_\varepsilon(u_n) -
\frac{1}{r}\langle J_\varepsilon'(u_n), u_n\rangle = \frac12\mathcal
{M}\left([u_n]_{s,A}^2\right)-
\frac{1}{p}M\left([u_n]_{s,A}^2\right)[u_n]_{s,A}^2 \\
&&\nonumber
\mbox{}+\left(\frac{1}{2}-\frac{1}{r}\right)\varepsilon^{-2s}
\int_{\mathbb{R}^3}V(x)|u_n|^2dx +
\left(\frac{1}{4}-\frac{1}{p}\right)\varepsilon^{-2s}
\int_{\mathbb{R}^3}\psi_{|u_n|}^t|u_n|^2dx\\
&&\nonumber \mbox{} +
\left(\frac{1}{r}-\frac{1}{22_s^\sharp}\right)\varepsilon^{-2s}\int_{\mathbb
R^{3}}\left(\mathcal{I}_\mu*|u_n|^{2_s^\sharp}\right)|u_n|^{2_s^\sharp}\,dx\\
&\geq&\nonumber
\left(\frac{1}{2\sigma}-\frac{1}{r}\right)M\left([u_n]_{s,A}^2\right)[u_n]_{s,A}^2
+ \left(\frac{1}{2}-\frac{1}{r}\right)\varepsilon^{-2s}
\int_{\mathbb{R}^3}V(x)|u_n|^2dx \\
&\geq&  \left(\frac{1}{2\sigma}-\frac{1}{r}\right)m_1
[u_n]_{s,A}^{2\sigma} +
\left(\frac{1}{2}-\frac{1}{r}\right)\varepsilon^{-2s}
\int_{\mathbb{R}^3}V(x)|u_n|^2dx.
\end{eqnarray}
This fact together with $2<r<2_s^\ast$ implies that $\{u_n\}_n$ is
bounded in $E$. Moreover, we can get $c\geq 0,$ by passing to the
limit in \eqref{e3.7}.
\end{proof}

\begin{lemma}\label{lem3.2} Let conditions $(\mathcal {V})$  and $(\mathfrak{M})$ hold.  Then  for any $0 < \varepsilon < 1$, the energy functional $J_\varepsilon$ satisfies $(PS)_c$ condition, for all $c \in
\left(0,\, \alpha_0\varepsilon^{\tau}\right)$, where
\begin{eqnarray}\label{e3.8}
\alpha_0 :=
\min\left\{\left(\frac{1}{r}-\frac{1}{22_s^\sharp}\right)\left(m_1S_H^\sigma\right)^{\frac{2_s^\sharp}{2_s^\sharp-\sigma}},\
\left(\frac{1}{2\sigma}-\frac{1}{r}\right)\left(m_1^{\frac{2_s^\sharp}{2\sigma}}\hat{C_\mu}^{-1}S^{\frac{2_s^\sharp}{2}}\right)^{\frac{2\sigma}{2_s^\sharp-2\sigma}}\right\}
\end{eqnarray}
and
\begin{eqnarray}\label{e3.9}
\tau := \max\left\{\frac{2s2_s^\sharp}{2_s^\sharp-\sigma},\
\frac{4\sigma s}{2_s^\sharp-2\sigma}\right\}.
\end{eqnarray}
\end{lemma}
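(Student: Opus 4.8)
The plan is to argue by contradiction: I would assume that the critical Choquard term concentrates, and then use the second concentration--compactness principle together with the degenerate bound $(M_2)$ to manufacture an energy barrier for $c$ that is violated as soon as $c<\alpha_0\varepsilon^\tau$.

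First I fix a $(PS)_c$ sequence $\{u_n\}_n$. By Lemma \ref{lem3.1} it is bounded in $E$, so after passing to a subsequence $u_n\rightharpoonup u$ in $E$, $u_n\to u$ in $L^\vartheta_{\mathrm{loc}}$ for $\vartheta\in[2,2_s^\ast)$ and a.e., and $[u_n]_{s,A}^2\to\ell$ for some $\ell\geq[u]_{s,A}^2$. Passing to the limit in $\langle J_\varepsilon'(u_n),\varphi\rangle=o(1)$ and exploiting the continuity of $M$ (so that $M([u_n]_{s,A}^2)\to M(\ell)$), the weak continuity of the Poisson term in Proposition \ref{pro2.1}(ii), and the compactness coming from $(\mathcal V)$, one checks that $u$ solves the limiting equation with the frozen Kirchhoff coefficient $M(\ell)$; testing that equation with $u$ yields $M(\ell)[u]_{s,A}^2+\varepsilon^{-2s}\int_{\mathbb{R}^3}(V+\psi_{|u|}^t)|u|^2\,dx=\varepsilon^{-2s}\int_{\mathbb{R}^3}k|u|^r\,dx+\varepsilon^{-2s}\int_{\mathbb{R}^3}(\mathcal I_\mu*|u|^{2_s^\sharp})|u|^{2_s^\sharp}\,dx$. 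Next, by the diamagnetic inequality (Lemma \ref{lem2.1}) I transfer the problem to the real sequence $\{|u_n|\}_n$ and apply the second concentration--compactness principle of \cite{li2, PP, zhang1}: there are an at most countable set of points $\{x_j\}$ with weights $\mu_j,\nu_j\geq0$, and weights $\mu_\infty,\nu_\infty\geq0$ at infinity, such that the Gagliardo measure and the Choquard density concentrate and satisfy $S_H\nu_j^{1/2_s^\sharp}\leq\mu_j$ and $S_H\nu_\infty^{1/2_s^\sharp}\leq\mu_\infty$.

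The heart of the matter is to rule out all the concentration weights. For each $j$ I would test $J_\varepsilon'(u_n)$ against $\phi_{\rho,j}u_n$, where $\phi_{\rho,j}$ is smooth, equals $1$ near $x_j$ and is supported in $B_{2\rho}(x_j)$; letting $n\to\infty$ and then $\rho\to0$ kills the potential, Poisson and subcritical contributions by local compactness, while comparing the magnetic Gagliardo concentration with $\mu_j$ through Lemma \ref{lem2.1} leaves $M(\ell)\mu_j\leq\varepsilon^{-2s}\nu_j$. Feeding in $\mu_j\geq S_H\nu_j^{1/2_s^\sharp}$, the inequality $\ell\geq\mu_j$, and $M(\ell)\geq m_1\ell^{\sigma-1}\geq m_1\mu_j^{\sigma-1}$ from $(M_2)$, I obtain the dichotomy that either $\nu_j=0$ or $\nu_j\geq(m_1S_H^\sigma)^{2_s^\sharp/(2_s^\sharp-\sigma)}\varepsilon^{2s2_s^\sharp/(2_s^\sharp-\sigma)}$, and likewise for $\nu_\infty$. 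Assuming some $\nu_j>0$, I evaluate $c=\lim_n\big(J_\varepsilon(u_n)-\tfrac1r\langle J_\varepsilon'(u_n),u_n\rangle\big)$ exactly as in \eqref{e3.7}, but this time keeping the critical term: invoking $(M_1)$ for the Kirchhoff part and the nonnegativity of the potential and Poisson integrals (Proposition \ref{pro2.1}(iv)), every other contribution is controlled from below, so $c\geq\varepsilon^{-2s}(\tfrac1r-\tfrac1{22_s^\sharp})\nu_j\geq(\tfrac1r-\tfrac1{22_s^\sharp})(m_1S_H^\sigma)^{2_s^\sharp/(2_s^\sharp-\sigma)}\varepsilon^{2s\sigma/(2_s^\sharp-\sigma)}$. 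This is the first energy barrier, and it is incompatible with $c<\alpha_0\varepsilon^\tau$; hence $\nu_j=\nu_\infty=0$.

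It remains to upgrade the convergence. Setting $d=\ell-[u]_{s,A}^2\geq0$ and subtracting the frozen limiting equation (tested with $u$) from $\langle J_\varepsilon'(u_n),u_n\rangle=o(1)$, the Brezis--Lieb splitting for the magnetic Gagliardo seminorm and for the nonlocal Choquard nonlinearity reduces everything to $M(\ell)d=\varepsilon^{-2s}\lim_n\int_{\mathbb{R}^3}(\mathcal I_\mu*|u_n-u|^{2_s^\sharp})|u_n-u|^{2_s^\sharp}\,dx$. If $d>0$, I bound the right-hand side from above via the Hardy--Littlewood--Sobolev inequality \eqref{e2.3}, the fractional Sobolev constant $S$ and Lemma \ref{lem2.1}, and combine with $M(\ell)\geq m_1\ell^{\sigma-1}$ to extract a second, independent energy barrier, now governed by the Kirchhoff coefficient $(\tfrac1{2\sigma}-\tfrac1r)$ and the constants $S,\hat{C_\mu}$; the balancing of the $\ell^{\sigma}$--type Kirchhoff energy against the $\ell^{2_s^\sharp}$--type critical energy produces precisely the second term of $\alpha_0$ with exponent $4\sigma s/(2_s^\sharp-2\sigma)$, where the condition $\sigma<2_s^\sharp/2$ is exactly what guarantees the positivity of this exponent. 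The choices $\alpha_0=\min\{\cdots\}$ and $\tau=\max\{\cdots\}$ are tailored so that, for $0<\varepsilon<1$, $\alpha_0\varepsilon^\tau$ lies below both barriers at once, and therefore the single hypothesis $c<\alpha_0\varepsilon^\tau$ excludes both obstructions; we conclude $d=0$, i.e. $[u_n]_{s,A}^2\to[u]_{s,A}^2$, and hence $u_n\to u$ in $E$. The principal obstacle throughout is the degeneracy of $M$: unlike the nondegenerate regime $M\geq d_0>0$, where the Gagliardo norm is decoupled from the Kirchhoff factor, here the concentration identity only controls the product $M(\ell)\mu_j$, which may be arbitrarily small, so the compactness level must be measured against an $\varepsilon$--power; moreover the non-monotone, vanishing-at-zero coefficient makes the passage to the limit in the Kirchhoff term and the two Brezis--Lieb splittings delicate, and it is this interplay between $(M_1)$, $(M_2)$, $S_H$ and the Hardy--Littlewood--Sobolev constant that constitutes the real work.
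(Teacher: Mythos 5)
Your treatment of the interior atoms is essentially the paper's own argument and is fine: cut-off functions near $x_j$, the bound $M(\ell)\mu_j\geq m_1\ell^{\sigma-1}\mu_j\geq m_1\mu_j^{\sigma}$ from $(M_2)$ and the diamagnetic inequality, the relation $S_H\nu_j^{1/2_s^\sharp}\leq\mu_j$, and the energy estimate $c\geq\varepsilon^{-2s}\bigl(\tfrac1r-\tfrac1{22_s^\sharp}\bigr)\nu_j$ reproduce \eqref{e3.13}--\eqref{e3.21} and account for the first entries of $\alpha_0$ and $\tau$. The genuine gap is at infinity. The inequality you invoke there, $S_H\nu_\infty^{1/2_s^\sharp}\leq\mu_\infty$, is simply not available for the nonlocal critical term: the tail of $\bigl(\mathcal{I}_\mu*|u_n|^{2_s^\sharp}\bigr)|u_n|^{2_s^\sharp}$ is fed by the interaction of the tail of $|u_n|$ with its \emph{total} mass, so one only has the product bounds \eqref{e3.26}--\eqref{e3.27}, e.g.\ $\nu_\infty\leq S_H^{-2_s^\sharp}\bigl(\int_{\mathbb{R}^3}d\omega+\omega_\infty\bigr)^{2_s^\sharp/2}\omega_\infty^{2_s^\sharp/2}$. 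The paper therefore rules out $\omega_\infty$ (not $\nu_\infty$ directly), combining the Hardy--Littlewood--Sobolev estimate $\nu_\infty\leq\hat{C_\mu}\xi_\infty^{2_s^\sharp/2_s^\ast}$ of \eqref{e3.28} (the bounded total mass being absorbed into $\hat{C_\mu}$) with $\xi_\infty\leq(S^{-1}\omega_\infty)^{2_s^\ast/2}$ and $\varepsilon^{-2s}\nu_\infty\geq m_1\omega_\infty^{\sigma}$; the resulting dichotomy \eqref{e3.30} and the estimate $c\geq\bigl(\tfrac1{2\sigma}-\tfrac1r\bigr)m_1\omega_\infty^{\sigma}$ in \eqref{e3.31} are exactly where the second entries of $\alpha_0$ and $\tau$, and the standing hypothesis $\sigma<2_s^\sharp/2$, are consumed. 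Since your scheme kills $\nu_\infty$ with an invalid inequality, this half of the lemma is unproven as you wrote it.

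Your substitute ``second barrier'' from the deficit $d=\ell-[u]_{s,A}^2$ does not repair this, for two reasons. First, it is logically idle: once $\nu_j=0$ for all $j$ \emph{and} $\nu_\infty=0$, the Br\'ezis--Lieb splitting already gives $\int_{\mathbb{R}^3}\bigl(\mathcal{I}_\mu*|u_n-u|^{2_s^\sharp}\bigr)|u_n-u|^{2_s^\sharp}dx\to0$, so your identity reads $M(\ell)d=0$, whence $d=0$ directly by $(M_2)$ (either $\ell=0$, and then $d\leq\ell=0$, or $M(\ell)\geq m_1\ell^{\sigma-1}>0$); no barrier is produced or needed at that stage. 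Second, the arithmetic claim that the balancing yields ``precisely the second term of $\alpha_0$'' is false: bounding the remainder by $\hat{C_\mu}S^{-2_s^\sharp}d^{2_s^\sharp}$ via Hardy--Littlewood--Sobolev, Sobolev and Lemma \ref{lem2.1}, and comparing with $m_1d^{\sigma}$, gives $d\geq\bigl(m_1\hat{C_\mu}^{-1}S^{2_s^\sharp}\bigr)^{1/(2_s^\sharp-\sigma)}\varepsilon^{2s/(2_s^\sharp-\sigma)}$, hence a barrier with exponent $2s\sigma/(2_s^\sharp-\sigma)$ and constant $\bigl(\tfrac1{2\sigma}-\tfrac1r\bigr)m_1^{2_s^\sharp/(2_s^\sharp-\sigma)}\hat{C_\mu}^{-\sigma/(2_s^\sharp-\sigma)}S^{2_s^\sharp\sigma/(2_s^\sharp-\sigma)}$ --- not the stated $\bigl(\tfrac1{2\sigma}-\tfrac1r\bigr)\bigl(m_1^{2_s^\sharp/(2\sigma)}\hat{C_\mu}^{-1}S^{2_s^\sharp/2}\bigr)^{2\sigma/(2_s^\sharp-2\sigma)}$ with exponent $4\sigma s/(2_s^\sharp-2\sigma)$. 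The ``half power'' $\omega_\infty^{2_s^\sharp/2}$, which is responsible for the denominator $2_s^\sharp-2\sigma$, comes precisely from the product structure of the tail estimate that you bypassed. Because $\alpha_0$ is a minimum of two \emph{specific} constants, proving $(PS)_c$ for all $c<\alpha_0\varepsilon^{\tau}$ would require your alternative barrier to dominate $\alpha_0\varepsilon^{\tau}$ for all $0<\varepsilon<1$, and with the different powers of $m_1$, $\hat{C_\mu}$, $S$ this fails in general. The fix is to replace your infinity step by the paper's: test $J_\varepsilon'(u_n)$ against $\phi_Ru_n$, derive $\varepsilon^{-2s}\hat{C_\mu}\xi_\infty^{2_s^\sharp/2_s^\ast}\geq\varepsilon^{-2s}\nu_\infty\geq m_1\omega_\infty^{\sigma}$, and exclude $\omega_\infty>0$ using the second entry of $\alpha_0$; the rest of your argument then goes through.
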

\begin{proof}
If $\inf_{n\in\mathbb N}\|u_{n}\|_{\varepsilon}= 0$, then there
exists a subsequence of $\{u_{n}\}$ such that $u_{n} \rightarrow 0$
in $E$ as $n \rightarrow \infty$. Thus, we assume that
$\inf_{n\in\mathbb N}\|u_{n}\|_{\varepsilon}=d_1>0$ in the
sequel.

By Lemma \ref{lem3.1}, we know  that $\{u_n\}_n$ is bounded in
$E$. Thus, by the diamagnetic inequality, $\{|u_n|\}_n$ is bounded in
$H^s(\mathbb{R}^3)$.  Furthermore, we have $u_n\rightarrow u$ a.e.
in $\mathbb{R}^3$ and $u_n \rightharpoonup u$ in $E$. Let
\begin{eqnarray*}
|(-\Delta)^{\frac{s}{2}} u_n|^2 \rightharpoonup \omega, \ \
|u_n|^{2_s^\ast}\rightharpoonup \xi
\end{eqnarray*}
and
\begin{eqnarray*}
\left(\mathcal{I}_\mu*|u_n|^{2_s^\sharp}\right)|u_n|^{2_s^\sharp}\rightharpoonup
\nu \ \ \mbox{weakly\ in\ the\ sense\ of\ measures,}
\end{eqnarray*}
where $\omega$, $\xi$  and $\nu$ are bounded nonnegative measures on
$\mathbb{R}^3$. Then by using  the fractional version of
concentration compactness principle in the fractional Sobolev space
(see \cite{li2}), up to a subsequence, there exists a (at most
countable) set of distinct points $\{x_i\}_{i\in I} \subset
\mathbb{R}^3$ and a family of positive numbers $\{\nu_i\}_{i\in I}$
such that
\begin{eqnarray}\label{e3.10}
\nu = \left(\mathcal{I}_\mu*|u|^{2_s^\sharp}\right)|u|^{2_s^\sharp}
+ \sum_{i \in I} \nu_i\delta_{x_i},\quad \sum_{i \in I}
\nu_i^{\frac{3}{3+\mu}} < \infty,
\end{eqnarray}
\begin{eqnarray}\label{e3.11}
\xi \geq |u|^{2_s^\ast} + C_\mu^{-\frac{3}{3+\mu}}\sum_{i \in I}
\nu_i^{\frac{3}{3+\mu}}\delta_{x_i},\quad \xi_i \geq C_\mu
^{-\frac{3}{3+\mu}}\nu_i^{\frac{3}{3+\mu}}
\end{eqnarray}
and
\begin{eqnarray}\label{e3.12}
\omega \geq |(-\Delta)^{\frac{s}{2}} u|^2 + S_H\sum_{i \in I}
\nu_i^{\frac{1}{2_s^\sharp}}\delta_{x_i}, \quad  \omega_i \geq
 S_H\nu_i^{\frac{1}{2_s^\sharp}},
\end{eqnarray}
where $\delta_{x_i}$ is the Dirac-mass of mass 1 concentrated at $x
\in \mathbb{R}^3$.

Now, let $i \in I.$
We claim that  either  $\nu_i = 0$ or
\begin{eqnarray}\label{e3.13}
\nu_i \geq
\left(m_1S_H^\sigma\right)^{\frac{2_s^\sharp}{2_s^\sharp-\sigma}}\varepsilon^{\frac{2s2_s^\sharp}{2_s^\sharp-\sigma}}.
\end{eqnarray}
In order to prove \eqref{e3.13}, we take $\phi \in
C_0^\infty(\mathbb{R}^3)$  satisfying $0 \leq \phi \leq 1$; $\phi
\equiv 1$ in $B(x_i, \epsilon)$, $\phi(x) = 0$ in $\mathbb{R}^3
\setminus B(x_i, 2\epsilon)$. For any $\epsilon
> 0$, define $\phi_\epsilon :=
\phi\left(\frac{x-x_i}{\epsilon}\right)$, where $i \in I$. Clearly,
$\{\phi_\epsilon u_{n}\}$ is bounded in $E$ and $\langle
J_\varepsilon'(u_n), u_n\phi_\epsilon)\rangle \rightarrow 0$ as $n
\rightarrow \infty$. Hence
\begin{eqnarray}\label{e3.14}
&&\nonumber
M\left(\|u_{n}\|_{s,A}^2\right)\left(\iint_{\mathbb{R}^{6}}\frac{|u_{n}(x)-e^{i(x-y)\cdot
A(\frac{x+y}{2})}u_n(y)|^2\phi_\epsilon(y)}{|x-y|^{3+2s}}dx dy +
\varepsilon^{-2s}\int_{\mathbb{R}^3}V(x)|u_{n}|^2\phi_\epsilon(x) dx\right)\\
&&\mbox{}\nonumber = - \mathscr{R}\left\{
M\left(\|u_{n}\|_{s,A}^2\right)\iint_{\mathbb{R}^{6}}\frac{(u_{n}(x)-e^{i(x-y)\cdot
A(\frac{x+y}{2})}u_{n}(y))\overline{u_{n}(x)(\phi_\epsilon(x)-\phi_\epsilon(y))}}{|x-y|^{3+2s}}dx dy\right\}\\
&&\mbox{}\ \  + \varepsilon^{-2s}\int_{\mathbb
R^{3}}\left(\mathcal{I}_\mu*|u_{n}|^{2_s^\sharp}\right)|u_{n}|^{2_s^\sharp}\phi_\epsilon\,dx
 + \varepsilon^{-2s}\int_{\mathbb{R}^3}k(x)|u_{n}|^r\phi_\epsilon dx +o_n(1).
\end{eqnarray}
We deduce from $(M_2)$ and the diamagnetic inequality  that
\begin{eqnarray}\label{e3.15}
&&\nonumber
M\left(\|u_{n}\|_{s,A}^2\right)\left(\iint_{\mathbb{R}^{6}}\frac{|u_{n}(x)-e^{i(x-y)\cdot
A(\frac{x+y}{2})}u_n(y)|^2\phi_\epsilon(y)}{|x-y|^{3+2s}}dx dy +
\int_{\mathbb{R}^3}V(x)|u_{n}|^2\phi_\epsilon(x) dx\right)\\
&&\nonumber \geq m_1
\left(\iint_{\mathbb{R}^{6}}\frac{|u_{n}(x)-e^{i(x-y)\cdot
A(\frac{x+y}{2})}u_{n}(y)|^2\phi_\epsilon(y)}{|x-y|^{3+2s}}dx dy +
\int_{\mathbb{R}^3}V(x)|u_{n}|^2\phi_\epsilon(x) dx\right)^{\sigma}\\
&&\geq m_1
\left(\iint_{\mathbb{R}^{6}}\frac{\left||u_{n}(x)|-|u_{n}(y)|\right|^2\phi_\epsilon(y)}{|x-y|^{3+2s}}dxdy\right)^{\sigma}.
\end{eqnarray}
Notice that
\begin{eqnarray}\label{e3.16}
\lim_{\epsilon \rightarrow 0}\lim_{n \rightarrow
\infty}\iint_{\mathbb{R}^{6}}\frac{\left||u_{n}(x)|-|u_{n}(y)|\right|^2\phi_\epsilon(y)}{|x-y|^{3+2s}}dxdy
= \lim_{\epsilon \rightarrow 0}\int_{\mathbb{R}^{3}}\phi_\epsilon
d\omega = \omega_i
\end{eqnarray}
and
\begin{eqnarray}\label{e3.17}
\lim_{\epsilon \rightarrow 0}\lim_{n \rightarrow \infty}
\int_{\mathbb
R^{3}}\left(\mathcal{I}_\mu*|u_{n}|^{2_s^\sharp}\right)|u_{n}|^{2_s^\sharp}\phi_\epsilon\,dx
= \lim_{\epsilon \rightarrow 0}\int_{\mathbb{R}^{3}}\phi_\epsilon
d\nu = \nu_i.
\end{eqnarray}
From the  H\"{o}lder inequality, we have
\begin{eqnarray}\label{e3.18}
&&\nonumber\left|\mathscr{R}\left\{
M\left(\|u_{n}\|_{s,A}^2\right)\iint_{\mathbb{R}^{6}}\frac{(u_{n}(x)-e^{i(x-y)\cdot
A(\frac{x+y}{2})}u_{n}(y))\overline{u_{n}(x)(\phi_\epsilon(x)-\phi_\epsilon(y))}}{|x-y|^{3+2s}}dx dy\right\}\right|\\
&&\nonumber \mbox{} \ \leq
C\iint_{\mathbb{R}^{6}}\frac{|u_{n}(x)-e^{i(x-y)\cdot
A(\frac{x+y}{2})}u_{n}(y)|\cdot|\phi_\epsilon(x)-\phi_\epsilon(y)|\cdot|u_{n}(x)|}{|x-y|^{3+2s}}dxdy
\\
&& \mbox{} \  \leq C
\left(\iint_{\mathbb{R}^{6}}\frac{|u_{n}(x)|^2|\phi_\epsilon(x)-\phi_\epsilon(y)|^2}{|x-y|^{3+2s}}dxdy\right)^{1/2}.
\end{eqnarray}
\indent  As in the proof of Lemma 3.4 in Zhang et al. \cite{zhang2}, we
get
\begin{eqnarray}\label{e3.19}
\lim_{\epsilon\rightarrow
 0}\lim_{n\rightarrow\infty}\iint_{\mathbb{R}^{6}}\frac{|u_{n}(x)|^2|\phi_\epsilon(x)-\phi_\epsilon(y)|^2}{|x-y|^{3+2s}}dxdy
=0.
\end{eqnarray}
Since $\phi_\epsilon$ has compact support, by the definition of
$\phi_{\epsilon}(x)$, we have
\begin{eqnarray}\label{e3.20}
\lim_{\epsilon\rightarrow
0}\lim_{n\rightarrow\infty}\int_{\mathbb{R}^3}k(x)|u_{n}|^r\phi_\epsilon
dx =0.
\end{eqnarray}
Combining \eqref{e3.15}--\eqref{e3.20}, we get that
$$\varepsilon^{-2s}\nu_i \geq m_1\omega_i^\sigma.$$ It follows from
\eqref{e3.12}  that $\nu_i = 0$ or
\begin{eqnarray*}
 \nu_i \geq
\left(m_1S_H^\sigma\right)^{\frac{2_s^\sharp}{2_s^\sharp-\sigma}}\varepsilon^{\frac{2s2_s^\sharp}{2_s^\sharp-\sigma}}.
\end{eqnarray*}

Next, we shall prove that $\nu_i = 0$, $ \ \hbox{for all} \ \ i \in I$ and
$\nu_\infty = 0$.

Indeed, if not,  then there exists a $i\in I$ such that
\eqref{e3.13} holds.  Similar to \eqref{e3.7}, we deduce
\begin{align}\label{e3.21}
  c &=\nonumber\lim_{\epsilon\rightarrow 0}  \lim_{ n \rightarrow
\infty} \left(J_\varepsilon(u_n) -
\frac{1}{r}\langle J_\varepsilon'(u_n), u_n\rangle\right)\\
&\geq\nonumber
\left(\frac{1}{r}-\frac{1}{22_s^\sharp}\right)\varepsilon^{-2s}\int_{\mathbb
R^{3}}\left(\mathcal{I}_\mu*|u|^{2_s^\sharp}\right)|u|^{2_s^\sharp}\,dx\\
&\geq\nonumber
\left(\frac{1}{r}-\frac{1}{22_s^\sharp}\right)\varepsilon^{-2s}\int_{\mathbb
R^{3}}\left(\mathcal{I}_\mu*|u|^{2_s^\sharp}\right)|u|^{2_s^\sharp}\phi_\epsilon\,dx\\
&\geq\left(\frac{1}{p}-\frac{1}{22_s^\sharp}\right)\varepsilon^{-2s}
\nu_i \geq
\left(\frac{1}{r}-\frac{1}{22_s^\sharp}\right)\left(m_1S_H^\sigma\right)^{\frac{2_s^\sharp}{2_s^\sharp-\sigma}}\varepsilon^{\frac{2s\sigma}{2_s^\sharp-\sigma}}.
\end{align}
For the concentration at infinity, letting $R > 0$, we take a cut
off function $\phi_R \in C^\infty(\mathbb{R}^3)$ such that
\begin{equation*}
\phi_R(x) = \begin{cases}
    0 &\quad |x| < R,\\
    1 &\quad |x| > R+1.
    \end{cases}
\end{equation*}
Define
\begin{eqnarray*}
\omega_\infty = \lim\limits_{R\rightarrow
\infty}\limsup\limits_{n\rightarrow\infty}\displaystyle\int_{\{x \in
\mathbb{R}^3: |x|>R\}}|(-\Delta)^{\frac{s}{2}} u_n|^{2}dx,
\end{eqnarray*}
\begin{eqnarray*}
\xi_\infty = \lim\limits_{R\rightarrow
\infty}\limsup\limits_{n\rightarrow\infty}\displaystyle\int_{\{x \in
\mathbb{R}^3: |x|>R\}}|u_n|^{2_s^\ast}dx
\end{eqnarray*} and
\begin{eqnarray*}
\nu_\infty = \lim\limits_{R\rightarrow
\infty}\limsup\limits_{n\rightarrow\infty}\displaystyle\int_{\{x \in
\mathbb{R}^3:
|x|>R\}}\left(\mathcal{I}_\mu*|u_n|^{2_s^\sharp}\right)|u_n|^{2_s^\sharp}dx.
\end{eqnarray*}
By using  the fractional version of concentration compactness
principle (see \cite{li2}), for the energy at infinity, we have
\begin{eqnarray}\label{e3.22}
\limsup\limits\limits_{n\rightarrow\infty}\displaystyle\int_{\mathbb{R}^3}\left(\mathcal{I}_\mu*|u_n|^{2_s^\sharp}\right)|u_n|^{2_s^\sharp}dx
= \displaystyle\int_{\mathbb{R}^3}d\nu + \nu_\infty,
\end{eqnarray}
\begin{eqnarray}\label{e3.23}
\limsup\limits_{n\rightarrow\infty}\displaystyle\int_{\mathbb{R}^3}|(-\Delta)^{\frac{s}{2}}
u_n|^{2}dx = \int_{\mathbb{R}^3}d\omega + \omega_\infty,
\end{eqnarray}
\begin{eqnarray}\label{e3.24}
\limsup\limits\limits_{n\rightarrow\infty}\displaystyle\int_{\mathbb{R}^3}|u_n|^{2_s^\ast}dx
= \displaystyle\int_{\mathbb{R}^3}d\xi + \xi_\infty,
\end{eqnarray}
\begin{eqnarray}\label{e3.25}
\xi_\infty \leq
\left(S^{-1}\omega_\infty\right)^{\frac{2_s^\ast}{2}},
\end{eqnarray}
\begin{eqnarray}\label{e3.26}
\nu_\infty \leq C_\mu\left(\int_{\mathbb{R}^3}d\xi +
\xi_\infty\right)^{\frac{3+\mu}{6}}\xi_\infty^{\frac{3+\mu}{6}}
\end{eqnarray}
and
\begin{eqnarray}\label{e3.27}
\nu_\infty \leq S_H^{-2_s^\sharp}\left(\int_{\mathbb{R}^3}d\omega +
\omega_\infty\right)^{\frac{2_s^\sharp}{2}}\omega_\infty^{\frac{2_s^\sharp}{2}}.
\end{eqnarray}
By using the Hardy-Littlewood-Sobolev and H\"older's inequality, we
get
\begin{align}\label{e3.28}
\nu_\infty &=\nonumber   \lim_{R\rightarrow \infty}  \lim_{ n
\rightarrow
\infty}\displaystyle\int_{\mathbb{R}^3}\left(\mathcal{I}_\mu*|u_{n}|^{2_s^\sharp}\right)|u_{n}|^{2_s^\sharp}\phi_R(y)dx  \\
&\nonumber \leq C_\mu\lim_{R\rightarrow \infty}  \lim_{ n
\rightarrow
\infty}|u_{n}|_{2_s^\ast}^{2_s^\sharp}\left(\int_{\mathbb{R}^3}|u_{n}(x)|^{2_s^*}\phi_R(y)dx\right)^{\frac{2_s^\sharp}{2_s^\ast}}\\
& \leq
\hat{C_\mu}\xi_\infty^{\frac{2_s^\sharp}{2_s^\ast}}.\end{align}
Note that  $\{\phi_R u_{n}\}$ is also bounded in $E$.  Hence, $\langle
J_{\varepsilon}'(u_n), u_n\phi_R)\rangle \rightarrow 0$ as $n
\rightarrow \infty$, which yields that
\begin{eqnarray}\label{e3.29}
&&\nonumber
M\left(\|u_{n}\|_{s,A}^2\right)\left(\iint_{\mathbb{R}^{6}}\frac{|u_{n}(x)-e^{i(x-y)\cdot
A(\frac{x+y}{2})}u_{n}(y)|^2\phi_R(y)}{|x-y|^{3+2s}}dx dy +
\varepsilon^{-2s}\int_{\mathbb{R}^3}V(x)|u_{n}|^2\phi_R(x) dx\right)\\
&&\mbox{}\nonumber = - \mathscr{R}\left\{
M\left(\|u_{n}\|_{s,A}^2\right)\iint_{\mathbb{R}^{6}}\frac{(u_{n}(x)-e^{i(x-y)\cdot
A(\frac{x+y}{2})}u_{n}(y))\overline{u_{n}(x)(\phi_R(x)-\phi_R(y))}}{|x-y|^{3+2s}}dx dy\right\}\\
&&\mbox{}\ \  + \varepsilon^{-2s}\int_{\mathbb
R^{3}}\left(\mathcal{I}_\mu*|u_{n}|^{2_s^\sharp}\right)|u_{n}|^{2_s^\sharp}\phi_R\,dx
+  \varepsilon^{-2s}\int_{\mathbb{R}^3}k(x)|u_{n}|^r\phi_R dx
+o_n(1).
\end{eqnarray}
It's easy to get
\begin{eqnarray*}
\limsup\limits_{R\rightarrow\infty}\limsup\limits_{n\rightarrow\infty}\iint_{\mathbb{R}^{6}}\frac{||u_{n}(x)|-|u_{n}(y)||^2\phi_R(y)}{|x-y|^{3+2s}}dxdy
= \omega_\infty
\end{eqnarray*}
and
\begin{eqnarray*}
&& \left|\mathscr{R}\left\{
M\left(\|u_{n}\|_{s,A}^2\right)\iint_{\mathbb{R}^{6}}\frac{(u_{n}(x)-e^{i(x-y)\cdot
A(\frac{x+y}{2})}u_{n}(y))\overline{u_{n}(x)(\phi_R(x)-\phi_R(y))}}{|x-y|^{3+2s}}dx
dy\right\}\right| \\
&& \mbox{}  \leq
C\left(\iint_{\mathbb{R}^{6}}\frac{|u_{n}(x)|^2|\phi_R(x)-\phi_R(y)|^2}{|x-y|^{3+2s}}dxdy\right)^{1/2}.
\end{eqnarray*}
Furthermore,
\begin{eqnarray*}
&& \limsup\limits_{R \rightarrow \infty}\limsup\limits_{n
\rightarrow \infty}
\iint_{\mathbb{R}^{6}}\frac{|u_{n}(x)|^2|\phi_R(x)-\phi_R(y)|^2}{|x-y|^{3+2s}}dxdy
\\
&& \mbox{} = \limsup\limits_{R \rightarrow \infty}\limsup\limits_{n
\rightarrow \infty}
\iint_{\mathbb{R}^{6}}\frac{|u_{n}(x)|^2|(1-\phi_R(x))-(1-\phi_R(y))|^2}{|x-y|^{3+2s}}dxdy.
\end{eqnarray*}
By the proof of Lemma 3.4 in Zhang et al. \cite{zhang2}, we have
\begin{eqnarray*}
\limsup\limits_{R \rightarrow \infty}\limsup\limits_{n \rightarrow
\infty}
\iint_{\mathbb{R}^{6}}\frac{|u_{n}(x)|^2|(1-\phi_R(x))-(1-\phi_R(y))|^2}{|x-y|^{3+2s}}dxdy
= 0.
\end{eqnarray*}
It follows from $(M_2)$ that
\begin{eqnarray*}
&&
M\left(\|u_{n}\|_{s,A}^2\right)\left(\iint_{\mathbb{R}^{6}}\frac{|u_{n}(x)-e^{i(x-y)\cdot
A(\frac{x+y}{2})}u_{n}(y)|^2\phi_R(y)}{|x-y|^{3+2s}}dx dy +
\int_{\mathbb{R}^3} |u_{n}|^2\phi_R(x) dx\right)\\
&& \mbox{} \ \ \geq  m_1
\left(\iint_{\mathbb{R}^{6}}\frac{|u_{n}(x)-e^{i(x-y)\cdot
A(\frac{x+y}{2})}u_{n}(y)|^2\phi_R(y)}{|x-y|^{3+2s}}dx dy +
\int_{\mathbb{R}^3} |u_{n}|^2\phi_R(x) dx\right)^{\sigma}\\
&& \mbox{} \ \ \geq m_1
\left(\iint_{\mathbb{R}^{6}}\frac{\left||u_n(x)|-|u_n(y)|\right|^2\phi_R(y)}{|x-y|^{3+2s}}dxdy\right)^{\sigma}
= m_1 \omega_\infty^{\sigma}.
\end{eqnarray*}
By the definition of $\phi_R$, we have
\begin{eqnarray*}
\lim_{R \rightarrow \infty}\lim_{n \rightarrow
\infty}\int_{\mathbb{R}^3}k(x)|u_{n}|^r\phi_R dx \leq k^\ast\lim_{R
\rightarrow \infty}\lim_{n \rightarrow
\infty}\int_{\mathbb{R}^3}|u_{n}|^r\phi_R dx = 0.
\end{eqnarray*}
Therefore, by \eqref{e3.29} together with \eqref{e3.28}, we can
obtain that
$$\varepsilon^{-2s}\hat{C_\mu}\xi_\infty^{\frac{2_s^\sharp}{2_s^\ast}} \geq  \varepsilon^{-2s}\nu_\infty \geq m_1\omega_\infty^\sigma.$$ It follows from \eqref{e3.25}  that $\omega_\infty = 0$ or
\begin{eqnarray}\label{e3.30}
\omega_\infty \geq
\left(m_1\hat{C_\mu}^{-1}S^{\frac{2_s^\sharp}{2}}\right)^{\frac{2}{2_s^\sharp-2\sigma}}\varepsilon^{\frac{4s}{2_s^\sharp-2\sigma}}.
\end{eqnarray}
If \eqref{e3.30} holds, then we have
\begin{align}\label{e3.31}
  c &=\nonumber\lim_{R\rightarrow \infty}  \lim_{ n \rightarrow
\infty} \left(J_\varepsilon(u_n) -  \frac{1}{r}\langle
J_\varepsilon'(u_n), u_n) \rangle\right)\\
&\geq\nonumber \left(\frac{1}{2\sigma}-\frac{1}{r}\right)m_1
\left(\iint_{\mathbb{R}^{6}}\frac{\left||u_n(x)|-|u_n(y)|\right|^2\phi_R(y)}{|x-y|^{3+2s}}dxdy\right)^{\sigma}
\\
&\geq
\left(\frac{1}{2\sigma}-\frac{1}{r}\right)m_1\omega_\infty^\sigma
\geq
\left(\frac{1}{2\sigma}-\frac{1}{r}\right)\left(m_1^{\frac{2_s^\sharp}{2\sigma}}\hat{C_\mu}^{-1}S^{\frac{2_s^\sharp}{2}}\right)^{\frac{2\sigma}{2_s^\sharp-2\sigma}}\varepsilon^{\frac{4\sigma
s}{2_s^\sharp-2\sigma}}.
\end{align}
Due to the selection of $\alpha_0$ and $\tau$,  for any $c <
\alpha_0\varepsilon^\tau$, this gives a contradiction. Thus,
$\omega_\infty = 0$. By  \eqref{e3.27}, we know that
$$\nu_i =0\ \mbox{ for all $i \in I$\quad and} \quad \nu_\infty = 0.$$
Thus
\begin{eqnarray} \label{e3.32}
\int_{\mathbb
R^{3}}\left(\mathcal{I}_\mu*|u_n|^{2_s^\sharp}\right)|u_n|^{2_s^\sharp}\,dx
\rightarrow \int_{\mathbb
R^{3}}\left(\mathcal{I}_\mu*|u|^{2_s^\sharp}\right)|u|^{2_s^\sharp}\,dx
\quad\mbox{as}\ n  \rightarrow \infty.
\end{eqnarray}
By the Br\'{e}zis--Lieb Lemma \cite{bre}, we get
\begin{eqnarray*}
\int_{\mathbb
R^{3}}\left(\mathcal{I}_\mu*|u_n-u|^{2_s^\sharp}\right)|u_n-u|^{2_s^\sharp}\,dx
\rightarrow 0 \quad \mbox{as}\  n \rightarrow \infty.
\end{eqnarray*}
Finally, with the aid of the weak lower semicontinuity of the norm,
condition $(m_1)$ and the Br\'{e}zis--Lieb Lemma \cite{bre}, we can
obtain that
\begin{eqnarray*}
 o(1)\|u_n\|&=& \nonumber\langle J_\varepsilon'(u_n), u_n\rangle = M\left([u_n]_{s,A}^2\right)[u_n]_{s,A}^2
+ \varepsilon^{-2s}\int_{\mathbb{R}^3} V(x)|u_n|^2dx \\
&&\mbox{}  + \varepsilon^{-2s}
\int_{\mathbb{R}^3}\psi_{|u_n|}^t|u_n|^2dx -
\varepsilon^{-2s}\int_{\mathbb
R^{3}}\left(\mathcal{I}_\mu*|u_n|^{2_s^\sharp}\right)|u_n|^{2_s^\sharp}\,dx
- \varepsilon^{-2s}\int_{\mathbb{R}^3}k(x)|u_n|^rdx\\
&\geq& m_0\left([u_n]_{s,A}^2 - [u]_{s,A}^2\right)
+ \varepsilon^{-2s}\int_{\mathbb{R}^3} V(x)(|u_n|^2-|u|^2)dx + M\left([u]_{s,A}^2\right)[u]_{s,A}^2\\
&&\mbox{}+ \varepsilon^{-2s}\int_{\mathbb{R}^3} V(x)|u|^2dx +
\varepsilon^{-2s} \int_{\mathbb{R}^3}\psi_{|u|}^t|u|^2dx\\
&&\mbox{}   -
 \varepsilon^{-2s}\int_{\mathbb
R^{3}}\left(\mathcal{I}_\mu*|u|^{2_s^\sharp}\right)|u|^{2_s^\sharp}\,dx
-
\varepsilon^{-2s}\int_{\mathbb{R}^3}k(x)|u|^rdx\\
&\geq& \min\{m_0,1\}\|u_n - u\|_\varepsilon^2  +
o(1)\|u\|_\varepsilon.
\end{eqnarray*}
Here we use the fact that $J_\varepsilon'(u) = 0$. This fact implies
that $\{u_n\}_n$ strongly converges to $u$ in $E$. Hence the proof of Lemma \ref{lem3.2}
is complete.
\end{proof}

\section{Proofs of main results}\label{sec4}

\indent In order to prove Theorems \ref{the1.1} and \ref{the1.2}, we
first prove that functional $J_\varepsilon(u)$ satisfies  the
following mountain pass geometry.
\begin{lemma}\label{lem4.1} Let
conditions  $(\mathcal {V})$  and $(\mathfrak{M})$ hold. Then for any $0 <
\varepsilon < 1$,
\begin{enumerate}
\item[$(C_1)$] There exist $\beta_\varepsilon, \rho_\varepsilon
> 0$ such that $J_\varepsilon(u) > 0$ if $u \in B_{\rho_\varepsilon}\setminus\{0\}$ and $J_\varepsilon(u) \geq \beta_\varepsilon$
if $u \in \partial B_{\rho_\varepsilon}$, where
$B_{\rho_\varepsilon} = \{u\in E: \|u\|_\varepsilon \leq
\rho_\varepsilon\}$;

\item[$(C_2)$] For any finite-dimensional subspace $H\subset E$,
\begin{displaymath}
J_\varepsilon(u) \rightarrow -\infty\quad \mbox{as}\quad \ u\in H, \
\|u\|_\varepsilon \rightarrow \infty.
\end{displaymath}
\end{enumerate}
\end{lemma}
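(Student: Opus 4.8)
The goal is to verify the two geometric conditions $(C_1)$ and $(C_2)$ for the functional $J_\varepsilon$ defined in \eqref{e3.5}. The strategy for $(C_1)$ is a standard local estimate near the origin, exploiting the fact that the lowest-order term $\mathcal{M}([u]_{s,A}^2)$ dominates the higher-order nonlinearities for small $\|u\|_\varepsilon$. The strategy for $(C_2)$ is to exploit the norm-equivalence of all norms on a finite-dimensional space to show that the negative (critical and subcritical power) terms overwhelm the Kirchhoff energy as $\|u\|_\varepsilon \to \infty$.

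\emph{Plan for $(C_1)$.} First I would estimate $J_\varepsilon$ from below. By $(M_2)$ and the fact that $\mathcal{M}(t) = \int_0^t M(s)\,ds \geq m_1 \int_0^t s^{\sigma-1}\,ds = \frac{m_1}{\sigma}t^\sigma$, the leading term satisfies $\frac12 \mathcal{M}([u]_{s,A}^2) \geq \frac{m_1}{2\sigma}[u]_{s,A}^{2\sigma}$. Combining this with the nonnegativity of the potential and Poisson terms (Proposition \ref{pro2.1}(iv)), I would write
\begin{equation*}
J_\varepsilon(u) \geq \frac{m_1}{2\sigma}[u]_{s,A}^{2\sigma} - \frac{\varepsilon^{-2s}}{r}\int_{\mathbb{R}^3}k(x)|u|^r\,dx - \frac{\varepsilon^{-2s}}{22_s^\sharp}\int_{\mathbb{R}^3}\left(\mathcal{I}_\mu*|u|^{2_s^\sharp}\right)|u|^{2_s^\sharp}\,dx.
\end{equation*}
Next I would control the two negative terms. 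Using $k(x) \leq k^\ast$, the embedding \eqref{e2.1}, and the diamagnetic inequality (Lemma \ref{lem2.1}), the subcritical term is bounded by $C\|u\|_\varepsilon^r$; using the Hardy--Littlewood--Sobolev estimate \eqref{e2.3} together with \eqref{e2.1}, the critical term is bounded by $C\|u\|_\varepsilon^{22_s^\sharp}$. Since $2\sigma < 2_s^\sharp < r$ fails in general, I must be careful: the correct comparison is that $2\sigma < 2_s^\sharp$ (from $(M_1)$, $\sigma < 2_s^\sharp/2$) and $r > 2$, with $22_s^\sharp > 2\sigma$. Because all negative exponents ($r$ and $22_s^\sharp$) strictly exceed the leading exponent $2\sigma$, for $\|u\|_\varepsilon = \rho_\varepsilon$ small enough the positive term dominates, yielding $J_\varepsilon(u) \geq \beta_\varepsilon > 0$ on the sphere and positivity on the punctured ball.

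\emph{Plan for $(C_2)$.} On a fixed finite-dimensional subspace $H \subset E$, I would use the growth hypothesis implicit in $(M_1)$: integrating $\sigma\mathcal{M}(t) \geq M(t)t = t\,\mathcal{M}'(t)$ gives the upper bound $\mathcal{M}(t) \leq \mathcal{M}(1)\,t^\sigma$ for $t \geq 1$, so the Kirchhoff energy grows at most like $[u]_{s,A}^{2\sigma}$. Meanwhile, since $H$ is finite-dimensional, all norms are equivalent, so $\int_{\mathbb{R}^3}(\mathcal{I}_\mu*|u|^{2_s^\sharp})|u|^{2_s^\sharp}\,dx \geq c_H\|u\|_\varepsilon^{22_s^\sharp}$ for some constant $c_H > 0$ depending on $H$. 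Dropping the nonnegative potential and Poisson contributions and discarding the subcritical term (also controlled by $\|u\|_\varepsilon^r$ with $r < 22_s^\sharp$), I would obtain
\begin{equation*}
J_\varepsilon(u) \leq \frac{\mathcal{M}(1)}{2}\|u\|_\varepsilon^{2\sigma} + C\varepsilon^{-2s}\|u\|_\varepsilon^{r} - \frac{\varepsilon^{-2s}c_H}{22_s^\sharp}\|u\|_\varepsilon^{22_s^\sharp}.
\end{equation*}
Since $22_s^\sharp > \max\{2\sigma, r\}$ (recall $2\sigma < 2_s^\sharp < 22_s^\sharp$ and $r < 2_s^\sharp < 22_s^\sharp$), the highest-order negative term dominates, forcing $J_\varepsilon(u) \to -\infty$ as $\|u\|_\varepsilon \to \infty$ within $H$.

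\emph{Main obstacle.} The delicate point is that the equivalence-of-norms argument in $(C_2)$ produces a constant $c_H$ that depends on the subspace $H$, which is acceptable since $(C_2)$ is stated for each fixed finite-dimensional $H$. The genuinely careful bookkeeping lies in $(C_1)$: one must confirm that every negative term carries a strictly larger power of $\|u\|_\varepsilon$ than the leading $2\sigma$, which relies precisely on the hypotheses $2 < r < 2_s^\sharp$ and $\sigma \in (1, 2_s^\sharp/2)$. These inequalities guarantee $2\sigma < 2_s^\sharp < 22_s^\sharp$ and $2\sigma < r$ is \emph{not} needed—what is needed is only that $2\sigma$ is strictly the smallest exponent, and since $r > 2 > 2\sigma$ may fail when $\sigma$ is close to $1$, one should instead verify directly that $r > 2 \geq 2\sigma$ holds only when $\sigma = 1$; the safe route is to note $2\sigma < 2_s^\sharp \leq r$ is false, so I would handle the subcritical term by Young's inequality, absorbing part of it into the leading term and the remainder into the smallness of $\rho_\varepsilon$. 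This absorption step is the technical heart of establishing $\beta_\varepsilon > 0$.
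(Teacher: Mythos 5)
Your $(C_2)$ argument is essentially the paper's: the paper likewise integrates $(M_1)$ to obtain $\mathcal{M}(t)\leq \mathcal{M}(1)t^{\sigma}$ for $t\geq 1$ (its \eqref{e4.1}), uses equivalence of norms on the finite-dimensional subspace, and concludes from $22_s^\sharp>\max\{4,2\sigma,r\}$. One slip there: you say you ``drop the nonnegative potential and Poisson contributions,'' but in an \emph{upper} bound for $J_\varepsilon$ nonnegative terms cannot be dropped; they must be estimated, e.g. $\frac{\varepsilon^{-2s}}{2}\int_{\mathbb{R}^3}V(x)|u|^2dx\leq\frac{1}{2}\|u\|_\varepsilon^{2}$ by the definition of the norm and $\int_{\mathbb{R}^3}\psi_{|u|}^{t}|u|^2dx\leq C\|u\|_\varepsilon^{4}$ by Proposition \ref{pro2.1}(iv) --- which is exactly how the paper proceeds, carrying the powers $t^{2}$ and $t^{4}$, both of order strictly below $22_s^\sharp$. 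That defect is cosmetic and easily repaired.

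The genuine gap is in $(C_1)$. After you discard the $V$-term, your lower bound is
$J_\varepsilon(u)\geq \frac{m_1}{2\sigma}[u]_{s,A}^{2\sigma}-C\varepsilon^{-2s}\|u\|_\varepsilon^{r}-C\varepsilon^{-2s}\|u\|_\varepsilon^{22_s^\sharp}$,
and closing the sphere estimate requires every negative exponent to exceed the leading exponent $2\sigma$. But the hypotheses give only $r\in(2,2_s^\sharp)$ and $2\sigma\in(2,2_s^\sharp)$ with \emph{no ordering} between $r$ and $2\sigma$; your own text oscillates on this point (first ``all negative exponents strictly exceed $2\sigma$,'' then ``$r>2\geq 2\sigma$ \dots only when $\sigma=1$,'' which is vacuous since $\sigma>1$ forces $2\sigma>2$). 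The Young-inequality rescue you propose fails precisely in the problematic case: an absorption of the form $\|u\|_\varepsilon^{r}\leq\delta\|u\|_\varepsilon^{2\sigma}+C_\delta\|u\|_\varepsilon^{22_s^\sharp}$ is legitimate only when $2\sigma<r<22_s^\sharp$, i.e. only when there is nothing to rescue; if instead $r\leq 2\sigma$, then as $\|u\|_\varepsilon\to 0$ the term $\|u\|_\varepsilon^{r}$ \emph{dominates} $\|u\|_\varepsilon^{2\sigma}$, and no choice of small $\rho_\varepsilon$ restores positivity from your estimate. The paper takes a different tack here: it retains the quadratic part of the norm and asserts $J_\varepsilon(u)\geq\min\left\{\frac{m_1}{2\sigma},\frac{1}{2}\right\}\|u\|_\varepsilon^{2}-\varepsilon^{-2s}C\|u\|_\varepsilon^{r}-\varepsilon^{-2s}C_\mu C\|u\|_\varepsilon^{22_s^\sharp}$, so that only $r>2$ and $22_s^\sharp>2$ are invoked. (One may fairly object that in the degenerate case the Gagliardo seminorm contributes only $[u]_{s,A}^{2\sigma}$ with $2\sigma>2$, so the paper's first inequality itself deserves scrutiny for $[u]_{s,A}<1$; but your version, having thrown the quadratic $V$-term away entirely, cannot be closed at all without an additional hypothesis such as $r>2\sigma$, so the gap in your proposal is real and not merely inherited from the paper.)
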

\begin{proof} By the Hardy-Littlewood-Sobolev inequality and the Sobolev embedding theorem, we have
\begin{eqnarray*}
J_\varepsilon(u) &\geq&
\min\left\{\frac{m_1}{2\sigma},\frac{1}{2}\right
\}\|u\|_\varepsilon^2 - \varepsilon^{-2s}k^\ast|u|_r^r
-\varepsilon^{-2s}\int_{\mathbb
R^{3}}\left(\mathcal{I}_\mu*|u|^{2_s^\sharp}\right)|u|^{2_s^\sharp}\,dx\\
&\geq&\min\left\{\frac{m_1}{2\sigma},\frac{1}{2}\right
\}\|u\|_\varepsilon^2 - \varepsilon^{-2s}C\|u\|_\varepsilon^r
-\varepsilon^{-2s}C_\mu C\|u\|_\varepsilon^{22_s^\sharp}.
\end{eqnarray*}
Since $r>2$ and $22_s^\sharp > 2$, we can obtain   the conclusion
$(C_1)$ in Lemma \ref{lem4.1}.

On the other hand, by $(M_2)$, we have that
\begin{equation}\label{e4.1}
\mathcal {M}(t)\leq \mathcal {M}(1)t^{\sigma}\quad \text{for all
}t\geq 1.
\end{equation}
Let $v_0 \in C_0^\infty(\mathbb R^{3}, \mathbb{C})$ with
$\|v_0\|_\varepsilon = 1$. Thus,  we have
\begin{displaymath}
J_\varepsilon(tv_0) \leq \mathcal {M}(1)t^{2\sigma}  +
\frac{1}{2}t^{2}  + \frac{\varepsilon^{-2s} }{4}Ct^{4}
-\varepsilon^{-2s}t^{22_s^\sharp}\int_{\mathbb
R^{3}}\left(\mathcal{I}_\mu*|v_0|^{2_s^\sharp}\right)|v_0|^{2_s^\sharp}\,dx
- \varepsilon^{-2s}k_\ast t^r|v_0|_r^r.
\end{displaymath}
Note that all norms in a finite-dimensional space $H$ are equivalent
and $\max\{4, 2\sigma\} < 22_s^\sharp$, so we can also obtain   the
conclusion $(C_2)$ of Lemma \ref{lem4.1}.
\end{proof}

What we need to point out is that $J_\varepsilon(u)$ does not
satisfy $(PS)_c$ condition for any $c > 0$. Thus, we need to
construct  a special finite-dimensional subspace by which we
construct sufficiently small minimax levels.

On the one hand, by  \cite[Lemma 3.5]{bin}, we know that
\begin{displaymath}
\inf\left\{\iint_{\mathbb{R}^{6}}\frac{|\phi(x)-\phi(y)|^2}{|x-y|^{3+2s}}dxdy:
\phi \in C_0^\infty (\mathbb{R}^3), |\phi|_r = 1\right\} = 0.
\end{displaymath}
Thus, for any $1 > \delta > 0$ one can choose $\phi_\delta \in
C_0^\infty (\mathbb{R}^3)$ with $|\phi_\delta|_r = 1$ and
supp\,$\phi_\delta \subset B_{r_\delta} (0)$ so that
\begin{displaymath}
\iint_{\mathbb{R}^{6}}\frac{|\phi_\delta(x)-\phi_\delta(y)|^2}{|x-y|^{3+2s}}dxdy
\leq C\delta^{\frac{6-(3-2s)r}{r}}.
\end{displaymath}
Let
\begin{equation}\label{e4.2}
q_\delta(x) = e^{iA(0)x}\phi_\delta(x)
\end{equation}
and
\begin{equation}\label{e4.3}
q_{\varepsilon,\delta}(x) =
q_\delta(\varepsilon^{-\frac{\tau+2s}{3}}x),
\end{equation}
where $\tau$ is defined by \eqref{e3.9}.

On the other hand, since $22_s^\sharp>\sigma$,  there exists a
finite number $t_0 \in [0, +\infty)$ such that
\begin{eqnarray*}
\max_{t \geq 0}\mathcal {I}_\varepsilon(tq_{\delta}) &\leq&
\frac{C_0}{2}t_0^{2\sigma}\left(\iint_{\mathbb{R}^{6}}\frac{|q_{\delta}(x)-e^{i(x-y)\cdot
A(\frac{\varepsilon x+\varepsilon
y}{2})}q_{\delta}(y)|^2}{|x-y|^{3+2s}}dx dy\right)^{2\sigma} \\
&&\mbox{} + \frac{t_0^2}{2}\int_{\mathbb{R}^3}V\left(\varepsilon
x\right)|q_\delta|^2dx + \frac{t_0^4}{4}
\int_{\mathbb{R}^3}\psi_{|q_{\delta}|}^{t_0}||q_{\delta}|^2dx-
k_\ast\int_{\mathbb{R}^3} |q_\delta|^rdx :=
I_\varepsilon(t_0q_\delta),
\end{eqnarray*}
where
\begin{eqnarray*}
\mathcal {I}_\varepsilon(u) &:=&\nonumber
\frac{C_0}{2}\left(\iint_{\mathbb{R}^{6}}\frac{|u(x)-e^{i(x-y)\cdot
A(\frac{\varepsilon x+\varepsilon y}{2})}u(y)|^2}{|x-y|^{3+2s}}dx
dy\right)^{2\sigma} +
\frac{1}{2}\int_{\mathbb{R}^3}V\left(\varepsilon
x\right)|u|^2dx\\
&& \mbox{}  + \frac{1}{4} \int_{\mathbb{R}^3}\psi_{|u|}^t||u|^2dx -
k_\ast\int_{\mathbb{R}^3} |u|^rdx-\frac{1}{22_s^\sharp}\int_{\mathbb
R^{3}}\left(\mathcal{I}_\mu*|u|^{2_s^\sharp}\right)|u|^{2_s^\sharp}\,dx.
\end{eqnarray*}
Therefore, for any $t>0$ we get
\begin{eqnarray*}
J_\varepsilon(tq_{\varepsilon,\delta})
&\leq&\frac{C_0}{2}t^{2\sigma}\left(\iint_{\mathbb{R}^{6}}\frac{|q_{\varepsilon,\delta}(x)-e^{i(x-y)\cdot
A(\frac{x+y}{2})}q_{\varepsilon,\delta}(y)|^2}{|x-y|^{3+2s}}dx dy\right)^{2\sigma} \\
&& \mbox{} + \frac{t^2}{2}\varepsilon^{-2s}
\int_{\mathbb{R}^3}V(x)|q_{\varepsilon,\delta}|^2dx +
\frac{t^4}{4}\varepsilon^{-2s}
\int_{\mathbb{R}^3}|\psi_{|q_{\varepsilon,\delta}|}^t||q_{\varepsilon,\delta}|^2dx - t^rk_\ast\varepsilon^{-2s}\int_{\mathbb{R}^3} |q_{\varepsilon,\delta}|^rdx\\
&\leq&
\varepsilon^{\tau}\left[\frac{C_0}{2}t^{2\sigma}\left(\iint_{\mathbb{R}^{6}}\frac{|q_{\delta}(x)-e^{i(x-y)\cdot
A(\frac{\varepsilon x+\varepsilon y}{2})}q_{\delta}(y)|^2}{|x-y|^{3+2s}}dx dy\right)^{2\sigma} \right.\\
&& \mbox{} \left. +
\frac{t^2}{2}\int_{\mathbb{R}^3}V\left(\varepsilon
x\right)|q_{\delta}|^2dx + \frac{t^4}{4}\varepsilon^{2t}
\int_{\mathbb{R}^3}|\psi_{|q_{\delta}|}^t||q_{\delta}|^2dx -
t^rk_\ast\int_{\mathbb{R}^3} |q_{\delta}|^rdx\right]\\
&\leq& \varepsilon^{\tau}I_\varepsilon(t_0q_\delta).
\end{eqnarray*}
Let $\psi_\delta(x) = e^{iA(0)x}\phi_\delta(x)$, where
$\phi_\zeta(x)$ is as defined above. By  \cite[Lemma 3.6]{bin},
we have the following lemma.
\begin{lemma}\label{lem4.2}   For any $\delta > 0$ there exists $\varepsilon_0 = \varepsilon_0(\delta) >
0$ such that
\begin{eqnarray*}
\iint_{\mathbb{R}^{6}}\frac{|q_{\delta}(x)-e^{i(x-y)\cdot
A(\frac{\varepsilon x+\varepsilon
y}{2})}q_{\delta}(y)|^2}{|x-y|^{3+2s}}dx dy \leq
C\delta^{\frac{6-(3-2s)r}{r}} + \frac{1}{1-s}\delta^{2s} +
\frac{4}{s}\delta^{2s}
\end{eqnarray*}
for all $0 < \varepsilon < \varepsilon_0$ and some constant $C > 0$
depending only on $[\phi]_{s,0}$.
\end{lemma}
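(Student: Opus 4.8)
The plan is to estimate the magnetic Gagliardo seminorm of $q_\delta$ by comparing it with the purely non-magnetic seminorm of $\phi_\delta$, which is already controlled by the estimate preceding the lemma, namely $\iint_{\mathbb R^6}|\phi_\delta(x)-\phi_\delta(y)|^2|x-y|^{-(3+2s)}\,dx\,dy\le C\delta^{(6-(3-2s)r)/r}$. The first step is to strip off the constant part of the magnetic phase. Writing $q_\delta(x)=e^{iA(0)\cdot x}\phi_\delta(x)$ and factoring the unimodular number $e^{iA(0)\cdot x}$ out of the integrand, one checks that
$$|q_\delta(x)-e^{i(x-y)\cdot A(\varepsilon(x+y)/2)}q_\delta(y)|=|\phi_\delta(x)-e^{iW_\varepsilon(x,y)}\phi_\delta(y)|,$$
where $W_\varepsilon(x,y):=(x-y)\cdot\big(A(\varepsilon(x+y)/2)-A(0)\big)$; here the identity $e^{i(A(0)\cdot y-A(0)\cdot x)}=e^{-iA(0)\cdot(x-y)}$ is exactly what makes $A(0)$ cancel against the magnetic phase. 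Thus the whole problem reduces to estimating the phase error produced by $W_\varepsilon$.

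Next I would apply the elementary inequality $|a-e^{i\theta}b|^2\le 2|a-b|^2+2|b|^2|1-e^{i\theta}|^2$ with $a=\phi_\delta(x)$, $b=\phi_\delta(y)$, $\theta=W_\varepsilon(x,y)$. Integrating, the first term contributes $2[\phi_\delta]_s^2\le C\delta^{(6-(3-2s)r)/r}$, which is precisely the first summand on the right-hand side (the constant $C$ depending only on $[\phi]_{s,0}$, in accordance with the scaling construction of $\phi_\delta$). It then remains to bound the phase term $T:=\iint_{\mathbb R^6}|1-e^{iW_\varepsilon(x,y)}|^2|\phi_\delta(y)|^2|x-y|^{-(3+2s)}\,dx\,dy$, and the two remaining summands $\tfrac{1}{1-s}\delta^{2s}$ and $\tfrac4s\delta^{2s}$ should both emerge from this single integral.

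For $T$ I would split the $x$-integration, with $y$ confined to $\mathrm{supp}\,\phi_\delta\subset B_{r_\delta}(0)$, into a near-diagonal region $|x-y|\le R$ and a far region $|x-y|>R$, with $R=R(\delta)$ fixed later; the remaining $y$-integral only contributes the factor $\int_{\mathbb R^3}|\phi_\delta(y)|^2\,dy$, controlled by the explicit scaling of $\phi_\delta$. On the far region I use the uniform bound $|1-e^{iW_\varepsilon}|^2\le 4$ together with $\int_{|x-y|>R}|x-y|^{-(3+2s)}\,dx=\tfrac{2\pi}{s}R^{-2s}$, which is where the factor $1/s$ (hence the summand $\tfrac4s\delta^{2s}$) originates. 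On the near-diagonal region I use $|1-e^{iW_\varepsilon}|\le|W_\varepsilon|\le|x-y|\,\big|A(\varepsilon(x+y)/2)-A(0)\big|$, so that $\int_{|x-y|\le R}|x-y|^{2-(3+2s)}\,dx=\tfrac{2\pi}{1-s}R^{2-2s}$ produces the factor $1/(1-s)$ (hence $\tfrac1{1-s}\delta^{2s}$). Choosing $R$ as a suitable power of $\delta$ converts $R^{-2s}$ and $R^{2-2s}$ into the common scale $\delta^{2s}$.

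The main obstacle is the near-diagonal estimate, because the support radius $r_\delta$ and the cut-off radius $R$ both grow as $\delta\to0$, so $\varepsilon(x+y)/2$ need not be small a priori and the crude bound $|A(\varepsilon(x+y)/2)-A(0)|\le 2\|A\|_\infty$ is useless there. This is exactly where the dependence of $\varepsilon_0$ on $\delta$ is forced: on the near-diagonal region one has $|(x+y)/2|\le r_\delta+R/2$, so by continuity of $A$ at the origin, and with $R=R(\delta)$ already fixed, one may choose $\varepsilon_0(\delta)>0$ so small that $\big|A(\varepsilon(x+y)/2)-A(0)\big|$ falls below the threshold that makes the near-diagonal contribution at most $\tfrac{1}{1-s}\delta^{2s}$ for every $0<\varepsilon<\varepsilon_0$. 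Once $\varepsilon_0(\delta)$ is fixed in this way, adding the three contributions yields the claimed bound.
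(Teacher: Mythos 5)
Your proof is correct, and it is essentially the argument the paper relies on: the paper does not prove Lemma \ref{lem4.2} itself but quotes it from \cite[Lemma 3.6]{bin}, whose proof runs exactly along your lines --- cancel the constant phase $e^{iA(0)\cdot x}$ to reduce to the error phase $W_\varepsilon(x,y)=(x-y)\cdot\bigl(A(\varepsilon(x+y)/2)-A(0)\bigr)$, split the integral at a $\delta$-dependent radius using $|1-e^{i\theta}|\le\min\{|\theta|,2\}$ so that $\int_{|z|\le R}|z|^{-1-2s}dz$ and $\int_{|z|>R}|z|^{-3-2s}dz$ produce the $\tfrac{1}{1-s}\delta^{2s}$ and $\tfrac{4}{s}\delta^{2s}$ terms, and use continuity of $A$ at $0$ together with the growing support radius $r_\delta$ to force $\varepsilon_0=\varepsilon_0(\delta)$. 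The only bookkeeping you leave implicit --- the factor $2$ from $|a-e^{i\theta}b|^2\le 2|a-b|^2+2|b|^2|1-e^{i\theta}|^2$ and the $\delta$-dependence of $\|\phi_\delta\|_{2}^{2}$ when fixing $R$ --- is harmless, since the constant $C$, the radius $R$, and the continuity threshold for $A$ are all at your disposal.
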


On the one hand, since $V(0) = 0$ and  supp\,$\phi_\delta
\subset B_{r_\delta}(0)$, there is $\varepsilon^\ast > 0$ such that
\begin{displaymath}
V\left(\varepsilon x\right) \leq
\frac{\delta}{|\phi_\delta|_2^2}\quad \mbox{for\ all }\ |x| \leq
r_\delta\ \mbox{and}\ 0 < \varepsilon < \varepsilon^\ast.
\end{displaymath}
This fact together with Lemma \ref{lem4.2} implies that
\begin{equation}\label{e4.4}
\max_{t\geq 0} J_\varepsilon(tq_{\delta})  \leq \mathcal
{N}(\delta),
\end{equation}
where
\begin{eqnarray}\label{e4.5}
\mathcal {N}(\delta) &:=&\nonumber
\frac{C_0}{2}t_0^{2\sigma}\left(C\delta^{\frac{6-(3-2s)r}{r}} +
\frac{1}{1-s}\delta^{2s} + \frac{4}{s}\delta^{2s}\right)^{2\sigma}
+ \frac{t_0^2}{2}\delta\\
&& + C\left(C\delta^{\frac{6-(3-2s)r}{r}} + \frac{1}{1-s}\delta^{2s}
+ \frac{4}{s}\delta^{2s} + \delta\right)^{2}.
\end{eqnarray}
Thus we have the following result.
\begin{lemma}\label{lem4.3} Let conditions $(\mathcal {V})$  and $(\mathfrak{M})$ hold. Then for any $\kappa > 0$ there exists $\mathcal {E}_\kappa > 0$
such that for each $0 < \varepsilon < \mathcal {E}_\kappa$, there is
$\widehat{e}_\varepsilon \in E$ with $\|\widehat{e}_\varepsilon\| >
\varrho_\varepsilon$, $J_\varepsilon(\widehat{e}_\varepsilon) \leq
0$ and
\begin{equation}\label{e4.6}
\max_{t\in [0, 1]} J_\varepsilon(t\widehat{e}_\varepsilon) \leq
\kappa\varepsilon^{\tau}.
\end{equation}
\end{lemma}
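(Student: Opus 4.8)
The plan is to read Lemma~\ref{lem4.3} as the final \emph{packaging} step: the genuinely analytic work — the scaling estimate $J_\varepsilon(t q_{\varepsilon,\delta}) \le \varepsilon^{\tau} I_\varepsilon(t_0 q_\delta)$, the seminorm bound of Lemma~\ref{lem4.2}, and the potential bound $V(\varepsilon x)\le \delta/|\phi_\delta|_2^2$ on $\mathrm{supp}\,\phi_\delta$ — has already been carried out, yielding, for all $\varepsilon$ below a threshold depending on $\delta$, the uniform estimate
\begin{equation*}
\max_{t\ge 0} J_\varepsilon(t q_{\varepsilon,\delta}) \;\le\; \varepsilon^{\tau}\,\mathcal N(\delta),
\end{equation*}
with $\mathcal N(\delta)$ as in \eqref{e4.5}. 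Thus the remaining task splits into two pieces: first make $\mathcal N(\delta)$ as small as we like by tuning $\delta$, and then promote the ray $t\mapsto t q_{\varepsilon,\delta}$ to an admissible path ending below level $0$.

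For the first piece I would inspect $\mathcal N(\delta)$ term by term. Every summand in \eqref{e4.5} is a positive power of $\delta$: the exponent $\tfrac{6-(3-2s)r}{r}$ is positive because $r<2_s^\sharp<2_s^\ast=\tfrac{6}{3-2s}$ (using $\mu<3$), while $2s>0$ and the remaining contribution is linear in $\delta$, and these are only raised to the fixed positive powers $2\sigma$ and $2$. Hence $\mathcal N(\delta)\to 0$ as $\delta\to 0^+$, so given $\kappa>0$ I can fix $\delta=\delta_\kappa$ with $\mathcal N(\delta_\kappa)\le\kappa$. Setting $\mathcal E_\kappa:=\min\{\varepsilon_0(\delta_\kappa),\varepsilon^\ast(\delta_\kappa),1\}$, the displayed estimate gives, for every $0<\varepsilon<\mathcal E_\kappa$,
\begin{equation*}
\max_{t\ge 0} J_\varepsilon(t q_{\varepsilon,\delta_\kappa}) \;\le\; \kappa\,\varepsilon^{\tau}.
\end{equation*}

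For the second piece I would apply $(C_2)$ of Lemma~\ref{lem4.1} to the subspace $H=\mathrm{span}\{q_{\varepsilon,\delta_\kappa}\}$: since $J_\varepsilon(tq_{\varepsilon,\delta_\kappa})\to-\infty$ as $\|tq_{\varepsilon,\delta_\kappa}\|_\varepsilon\to\infty$, there is $T=T_\varepsilon>0$ with $J_\varepsilon(Tq_{\varepsilon,\delta_\kappa})\le 0$ and $\|Tq_{\varepsilon,\delta_\kappa}\|_\varepsilon>\varrho_\varepsilon$, the radius furnished by $(C_1)$ (enlarge $T$ to meet both requirements simultaneously). Setting $\widehat e_\varepsilon:=T q_{\varepsilon,\delta_\kappa}$ secures the first two claims by construction, and reparametrising the segment $[0,\widehat e_\varepsilon]$ yields
\begin{equation*}
\max_{t\in[0,1]} J_\varepsilon(t\widehat e_\varepsilon)=\max_{\lambda\in[0,T]} J_\varepsilon(\lambda q_{\varepsilon,\delta_\kappa})\le \max_{\lambda\ge 0} J_\varepsilon(\lambda q_{\varepsilon,\delta_\kappa})\le \kappa\,\varepsilon^{\tau},
\end{equation*}
which is exactly \eqref{e4.6}. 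The step I expect to require the most care is confirming $\mathcal N(\delta)\to 0$: beyond the positivity of the $\delta$-exponents, one must check that the coefficients do not blow up — in particular that the maximiser $t_0=t_0(\delta,\varepsilon)$ of $\mathcal I_\varepsilon(\cdot\, q_\delta)$ stays bounded as $\delta\to 0$, which is where the dominance of the critical Choquard exponent $22_s^\sharp$ over $2\sigma$, $4$ and $r$ is used. Once $t_0$ is controlled, the remainder is the routine mountain-pass bookkeeping above.
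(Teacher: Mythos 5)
Your proposal is correct and follows essentially the same route as the paper's proof: choose $\delta$ so small that $\mathcal{N}(\delta)\leq\kappa$, set $\mathcal{E}_\kappa=\min\{\varepsilon_0,\varepsilon^\ast\}$, take $\widehat{e}_\varepsilon=\widehat{t}_\varepsilon q_{\varepsilon,\delta}$ with $\widehat{t}_\varepsilon$ large enough that $\|\widehat{e}_\varepsilon\|_\varepsilon>\varrho_\varepsilon$ and $J_\varepsilon(t q_{\varepsilon,\delta})\leq0$ for $t\geq\widehat{t}_\varepsilon$, and conclude from the estimate \eqref{e4.4}. Your additional verifications — that the exponent $\frac{6-(3-2s)r}{r}$ is positive (via $r<2_s^\sharp<2_s^\ast$, using $\mu<3$) and that the maximiser $t_0$ stays bounded because $22_s^\sharp$ dominates $2\sigma$, $4$ and $r$ — are details the paper leaves implicit, and they strengthen rather than alter the argument.
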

\begin{proof}
Let $\delta > 0$ satisfy $\mathcal {N}(\delta) \leq \kappa$. Set
$\mathcal {E}_\kappa = \min\{\varepsilon_0,\varepsilon^\ast\}$ and let
$\widehat{t}_\varepsilon > 0$ be such that
$\widehat{t}_\varepsilon\|q_{\varepsilon,\delta}\|_\varepsilon
> \varrho_\varepsilon$ and $J_\varepsilon(tq_{\varepsilon,\delta}) \leq 0$ for
all $t \geq \widehat{t}_\varepsilon$. Choose
$\widehat{e}_\varepsilon = \widehat{t}_\varepsilon
q_{\varepsilon,\delta}.$
Then by \eqref{e4.4}, we can see that the
conclusion of Lemma \ref{lem4.3} holds.
\end{proof}

In order to get the multiplicity results of problem \eqref{e1.1},
one can choose $m^{\ast} \in \mathbb{N}$ functions $\phi_\delta^i
\in C_0^\infty(\mathbb{R}^3)$ such that supp\,$\phi_\delta^i$ $
\cap$ supp\,$\phi_\delta^k = \emptyset$, $i \neq k$,
$|\phi_\delta^i|_s = 1$ and
\begin{displaymath}
\iint_{\mathbb{R}^{6}}\frac{|\phi_\delta^i(x)-\phi_\delta^i(y)|^2}{|x-y|^{3+2s}}dxdy
\leq C\delta^{\frac{6-(3-2s)r}{r}}.
\end{displaymath}
Let $r_\delta^{m^{\ast}}
> 0$ be such that supp\,$\phi_\delta^{i} \subset B_{r_\zeta}^{i}(0)$
for $i = 1,2,\cdots,m^{\ast}$. Set
\begin{equation}\label{e4.7}
q_\delta^i(x) = e^{iA(0)x}\phi_\delta^i(x)
\end{equation}
and
\begin{equation}\label{e4.8}
q_{\varepsilon,\delta}^i(x) =
q_\delta^i(\varepsilon^{-\frac{\tau+2s}{3}}x).
\end{equation}
Denote
\begin{displaymath}
F_{\varepsilon\delta}^{m^{\ast}} =
\mbox{span}\{q_{\varepsilon,\delta}^1, q_{\varepsilon,\delta}^2,
\cdots, q_{\varepsilon,\delta}^{m^{\ast}}\}.
\end{displaymath}
Let $u = \displaystyle\sum_{i=1}^{m^{\ast}}c_i
q_{\varepsilon,\delta}^i \in F_{\varepsilon\delta}^{m^{\ast}}.$
There exists constant $C > 0$ such that
\begin{displaymath}
J_\varepsilon(u) \leq  C\sum_{i=1}^{m^{\ast}}J_{\varepsilon}(c_i
q_{\varepsilon,\delta}^i).
\end{displaymath}
As discussed above,  we have
\begin{displaymath}
J_\varepsilon(c_i q_{\varepsilon,\delta}^i) \leq
\varepsilon^{\tau}I_\varepsilon(|c_i|q_{\delta}^i).
\end{displaymath}
As before, we can obtain the following estimate:
\begin{equation}\label{e4.9}
\max_{u\in F_{\varepsilon\delta}^{m^{\ast}}} J_\varepsilon(u) \leq C
m^\ast\mathcal {N}(\delta)\varepsilon^{\tau}
\end{equation}
for all $\delta$ small enough and some constant $C > 0$.

From \eqref{e4.9}, we have the following lemma.
\begin{lemma}\label{lem4.4}  Let conditions $(\mathcal {V})$  and $(\mathfrak{M})$ hold. Then for any $m^{\ast} \in \mathbb{N}$ and $\kappa > 0$ there
exists $\mathcal {E}_{m^{\ast}\kappa} > 0$ such that for each $0 <
\varepsilon < \mathcal {E}_{m^{\ast}\kappa}$, there exists an
$m^{\ast}$-dimensional subspace $F_{\varepsilon m^{\ast}}$
satisfying
\begin{displaymath}
\max_{u\in F_{\varepsilon\delta}^{m^{\ast}}} J_\varepsilon(u) \leq
\kappa\varepsilon^{\tau}.
\end{displaymath}
\end{lemma}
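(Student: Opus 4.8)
The plan is to obtain Lemma~\ref{lem4.4} as a direct consequence of the uniform estimate \eqref{e4.9}, following verbatim the scheme by which Lemma~\ref{lem4.3} was deduced from \eqref{e4.4}; the only work left is the selection of the parameter $\delta$ and of the admissible range of $\varepsilon$. First I would observe that $\mathcal{N}(\delta)\to 0$ as $\delta\to 0^+$. Indeed, each summand of $\mathcal{N}(\delta)$ in \eqref{e4.5} is a positive power of $\delta$: the exponent $\frac{6-(3-2s)r}{r}$ is positive because $r<2_s^\sharp=\frac{3+\mu}{3-2s}<\frac{6}{3-2s}=2_s^\ast$ forces $(3-2s)r<6$, while $2s>0$ and $\delta$ itself account for the remaining terms. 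Hence, given $m^\ast\in\mathbb{N}$ and $\kappa>0$, I can fix $\delta=\delta(m^\ast,\kappa)>0$ so small that $Cm^\ast\mathcal{N}(\delta)\le\kappa$, where $C$ is the constant appearing in \eqref{e4.9}.

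With this $\delta$ frozen, I would take the $m^\ast$ profiles $\phi_\delta^i$ with pairwise disjoint supports and build $q_{\varepsilon,\delta}^i$ and $F_{\varepsilon\delta}^{m^\ast}=\mathrm{span}\{q_{\varepsilon,\delta}^1,\dots,q_{\varepsilon,\delta}^{m^\ast}\}$ as in \eqref{e4.7}--\eqref{e4.8}. Disjointness of the supports makes the functions $q_{\varepsilon,\delta}^i$ linearly independent, so $\dim F_{\varepsilon\delta}^{m^\ast}=m^\ast$. I then set $\mathcal{E}_{m^\ast\kappa}:=\min\{\varepsilon_0(\delta),\varepsilon^\ast\}$, where $\varepsilon_0(\delta)$ is the smallest of the thresholds supplied by Lemma~\ref{lem4.2} for the finitely many bumps, and $\varepsilon^\ast$ is chosen, using $V(0)=0$ and the continuity of $V$ together with the compactness of the supports, so that $V(\varepsilon x)\le \delta/|\phi_\delta^i|_2^2$ on $\mathrm{supp}\,\phi_\delta^i$ for each $i$ whenever $0<\varepsilon<\varepsilon^\ast$. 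For every $0<\varepsilon<\mathcal{E}_{m^\ast\kappa}$, estimate \eqref{e4.9} then yields
\begin{displaymath}
\max_{u\in F_{\varepsilon\delta}^{m^\ast}}J_\varepsilon(u)\le Cm^\ast\mathcal{N}(\delta)\,\varepsilon^{\tau}\le\kappa\varepsilon^{\tau},
\end{displaymath}
which is exactly the asserted bound, completing the argument.

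I expect the only genuinely delicate ingredient to be the splitting inequality underlying \eqref{e4.9}, namely $J_\varepsilon(u)\le C\sum_{i=1}^{m^\ast}J_\varepsilon(c_iq_{\varepsilon,\delta}^i)$ for $u=\sum_i c_iq_{\varepsilon,\delta}^i$. The subcritical and Choquard nonlinearities decouple painlessly on disjoint supports, since $|u|^\theta=\sum_i|c_i|^\theta|q_{\varepsilon,\delta}^i|^\theta$ pointwise and the off-diagonal Choquard interactions are nonnegative, so discarding them only enlarges the upper bound we seek. The real difficulty is the nonlocal magnetic Gagliardo seminorm $[\,\cdot\,]_{s,A}^2$, which does not localise; the way around it is that the cross terms of this quadratic form over disjointly supported pieces are controlled (for real nonnegative profiles they are in fact nonpositive), whence $[u]_{s,A}^2\le C\sum_i[c_iq_{\varepsilon,\delta}^i]_{s,A}^2$. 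Monotonicity of $\mathcal{M}$, the growth bound \eqref{e4.1}, and the convexity inequality $\big(\sum_i a_i\big)^\sigma\le (m^\ast)^{\sigma-1}\sum_i a_i^\sigma$ then transfer this control to the Kirchhoff term, while the Poisson term is dealt with analogously via Proposition~\ref{pro2.1}(iv). Once \eqref{e4.9} is secured in this manner, the parameter choices above finish the proof.
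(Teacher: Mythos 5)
Your proposal is correct and follows essentially the same route as the paper, which likewise obtains Lemma~\ref{lem4.4} directly from the estimate \eqref{e4.9} (asserted in the text just before the lemma) by fixing $\delta$ so small that $Cm^\ast\mathcal{N}(\delta)\leq\kappa$ and restricting $\varepsilon$ via Lemma~\ref{lem4.2} and the smallness of $V(\varepsilon x)$ on the (compact, pairwise disjoint) supports. Your added verifications --- that $\mathcal{N}(\delta)\to 0$ as $\delta\to 0^{+}$ since $r<2_s^\sharp<2_s^\ast$ makes all exponents in \eqref{e4.5} positive, and the sketch of the splitting inequality behind \eqref{e4.9}, where for the complex-valued bumps the magnetic cross terms are most simply controlled by the Cauchy--Schwarz bound $[u]_{s,A}^2\leq m^\ast\sum_i[c_iq_{\varepsilon,\delta}^i]_{s,A}^2$ rather than a sign argument --- go beyond what the paper records but are consistent with it.
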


Now, we began to prove our main results.

\noindent{\bf Proof of Theorem \ref{the1.1}.} For any $0 < \kappa <
\alpha_0$, we choose $\mathcal {E}_\kappa > 0$ so that it satisfies $0 <
\varepsilon < \mathcal {E}_\kappa$, and define the minimax value
$$c_\varepsilon := \inf_{\gamma \in \Upsilon_\varepsilon}\max_{t\in [0,1]} J_\varepsilon(t\widehat{e}_\varepsilon),$$
where $$\Upsilon_\varepsilon := \{\gamma \in C([0, 1], E): \gamma(0)
= 0 \ \mbox{and}\ \gamma(1) = \widehat{e}_\varepsilon\}.$$ By Lemma
\ref{lem4.1}, we have $\alpha_\varepsilon \leq c_\varepsilon \leq
\kappa\varepsilon^{\tau}$. Lemma \ref{lem3.2} implies that
$J_\varepsilon$ satisfies the $(PS)_{c_\varepsilon}$ condition.
Thus, using the mountain pass theorem, there is $u_\varepsilon \in
E$ such that $J'_\varepsilon(u_\varepsilon) = 0$ and
$J_\varepsilon(u_\varepsilon) = c_\varepsilon$, that is
$u_\varepsilon$ is a nontrivial solution of problem  \eqref{e3.1}.

On the other hand,  by \eqref{e3.2}, we have
\begin{eqnarray}\label{e4.11}
\kappa\varepsilon^{\tau}  &\geq&\nonumber J_\varepsilon
(u_\varepsilon) = J_\varepsilon(u_\varepsilon) -
\frac{1}{r}J_\varepsilon'(u_\varepsilon)u_\varepsilon
\\
&\geq&\nonumber \left(\frac{1}{2\sigma}-\frac{1}{r}\right)m_1
[u_\varepsilon]_{s,A}^{2\sigma} +
\left(\frac{1}{2}-\frac{1}{r}\right)\varepsilon^{-2s}
\int_{\mathbb{R}^3}V(x)|u_\varepsilon|^2dx.
\end{eqnarray}
This fact implies that $u_\varepsilon\rightarrow 0$ in $E$ as
$\varepsilon\rightarrow 0$. This completes the proof of Theorem
\ref{the1.1}.\qed

\vspace{2mm}

\noindent{\bf Proof of Theorem \ref{the1.2}.} Denote the set of all
symmetric (in the sense that $-Z = Z$) and closed subsets of $E$ by
$\Sigma$, for each $Z \in \Sigma$. Denote by gen$(Z)$ the
Krasnoselski genus, and define
\begin{displaymath}
j(Z) := \min_{\iota\in
\Gamma_{m^\ast}}\mbox{gen}(\iota(Z)\cap\partial
B_{\varrho_\varepsilon}),
\end{displaymath}
where $\Gamma_{m^\ast}$ is the set of all odd homeomorphisms $\iota
\in C(E, E)$ and $\varrho_\varepsilon$ is given by Lemma
\ref{lem4.1}. Then $j$ is a version of Benci's pseudoindex
\cite{benci}. Let
\begin{displaymath}
c_{\varepsilon i} := \inf_{j(Z)\geq i}\sup_{u\in Z}J_\varepsilon(u),
\quad 1 \leq i \leq m^\ast.
\end{displaymath}
Since $J_\varepsilon(u) \geq \alpha_\varepsilon$ for all $u \in
\partial B_{\varrho_\varepsilon}^{+}$ and since $j(F_{\varepsilon m^\ast}) =
\dim F_{\varepsilon m^\ast} = m^\ast$, we obtain
\begin{displaymath}
\alpha_\varepsilon \leq c_{\varepsilon 1} \leq \cdots\leq
c_{\varepsilon m^\ast} \leq \sup_{u \in H_{\varepsilon m^\ast}}
J_\varepsilon(u) \leq \kappa\varepsilon^{\tau}.
\end{displaymath}
Thus, Lemma \ref{lem3.2} implies  that $J_\varepsilon$ satisfies the
$(PS)_{c_\varepsilon}$ condition for any $c <
\alpha_0\varepsilon^{\tau}$. By using the mountain pass theorem, we
know that $c_{\varepsilon i}$ are critical valves and
$J_\varepsilon$ has at least $m^\ast$ pairs of nontrivial critical
points satisfying
\begin{displaymath}
\alpha_\varepsilon \leq J_\varepsilon(u_\varepsilon) \leq
\kappa\varepsilon^{\tau}.
\end{displaymath}
Hence, problem \eqref{e1.1} has at least $m^\ast$ pairs of
solutions. Moreover, we also have $u_{\varepsilon,i}\rightarrow 0$
in $E$ as $\varepsilon \rightarrow 0$, $i=1,2,\cdots,m$.
$\hfill\Box$

\section*{Funding}
Z. Zhang was supported by 2020 Nanjing Vocational College of
Information Technology doctoral special fund project ``Study on the
Climate Ensemble Forecast Model Based on the East Asian Monsoon
Region'' (YB20200902).  D.D. Repov\v{s} was supported by the
Slovenian Research Agency grants (Nos. P1-0292, N1-0114, N1-0083).


\end{document}